\newtheorem{theorem}{Theorem}[section]
\newtheorem{defn}[theorem]{Definition}
\newtheorem{lemma}[theorem]{Lemma}
\newtheorem{eple}[theorem]{Example}
\newtheorem{rmk}[theorem]{Remarks}
\newtheorem{dsc}[theorem]{Discussion}
\newtheorem{nota}[theorem]{Notation}
\newsavebox{\indbin}
\savebox{\indbin}{\begin{picture}(0,0)
\newlength{\gnu}
\settowidth{\gnu}{$\smile$} \setlength{\unitlength}{.5\gnu}
\put(-1,-.65){$\smile$} \put(-.25,.1){$|$}
\end{picture}}
\newcommand{\be}{\begin{enumerate}}
\newcommand{\bd}{\begin{defn}}
\newcommand{\bt}{\begin{theorem}}
\newcommand{\bl}{\begin{lemma}}
\newcommand{\ee}{\end{enumerate}}
\newcommand{\ed}{\end{defn}}
\newcommand{\et}{\end{theorem}}
\newcommand{\el}{\end{lemma}}
\begin{document}
\title{Nonstandard Martingales, Markov Chains and the Heat Equation}
\author{Tristram de Piro}
\address{Mathematics Department, The University of Exeter, Exeter}
 \email{t.depiro@curvalinea.net}
\thanks{}
\begin{abstract}
We construct a nonstandard martingale from a discrete Markov chain. This is shown to be useful for solving the heat equation with a non smooth initial condition. We show that the nonstandard solution to the heat equation with a smooth initial condition specialises to the classical solution
\end{abstract}
\maketitle

One of the most fundamental results in the theory of Markov chains is the following;\\

\begin{theorem}
\label{coupling}
Let $P$ be the transition matrix of an irreducible, aperiodic,positive recurrent Markov chain, $\{X_{n}\}_{n\geq 0}$, with invariant distribution $\pi$. Then, for any initial distribution, $P(X_{n}=j)\rightarrow\pi_{j}$, as $n\rightarrow\infty$. In particular;\\

$p_{ij}^{(n)}\rightarrow \pi_{j}$, for all states $i,j$, as $n\rightarrow\infty$\\

\end{theorem}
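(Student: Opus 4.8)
The plan is to prove this by a coupling argument. First I would introduce a second copy $\{Y_n\}_{n\geq 0}$ of the chain, independent of $\{X_n\}_{n\geq 0}$, started from the invariant distribution $\pi$; by invariance, $Y_n$ has law $\pi$ for every $n$. I would then pass to the product chain $\{(X_n,Y_n)\}_{n\geq 0}$ on the product state space, with transition probabilities $p_{(i,k),(j,l)}=p_{ij}p_{kl}$ and invariant probability measure $\pi\otimes\pi$.

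The key structural step is that the product chain is irreducible and positive recurrent. Irreducibility is where aperiodicity is used: for an irreducible aperiodic chain one has $p_{ij}^{(n)}>0$ for all sufficiently large $n$ (depending on $i,j$), so given $(i,k)$ and $(j,l)$ there is a common $n$ with $p_{ij}^{(n)}p_{kl}^{(n)}>0$, exhibiting a path in the product chain; without aperiodicity this can fail. Positive recurrence of the product chain then follows from the existence of the invariant probability measure $\pi\otimes\pi$ together with irreducibility, which is a standard fact. In particular the product chain is recurrent, so the coupling time $T=\inf\{n\geq 0:X_n=Y_n\}$, i.e.\ the hitting time of the diagonal, satisfies $T<\infty$ almost surely.

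Next I would run the coupled process: let $X_n$ and $Y_n$ evolve independently up to time $T$, and thereafter let them move in lockstep. One checks that the resulting process $\{X'_n\}$ has the same transition matrix and the same initial law as $\{X_n\}$, that $\{Y'_n\}$ is a stationary copy with law $\pi$ at each time, and that $X'_n=Y'_n$ for all $n\geq T$. The coupling inequality then gives, for any state $j$,
\[
|P(X_n=j)-\pi_j|=|P(X'_n=j)-P(Y'_n=j)|\leq P(X'_n\neq Y'_n)\leq P(T>n),
\]
and the right-hand side tends to $0$ as $n\to\infty$ because $\{T>n\}$ decreases to $\{T=\infty\}$, a null event. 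This yields $P(X_n=j)\to\pi_j$ for every $j$. The \emph{in particular} clause is the special case in which the initial distribution is the point mass $\delta_i$, for which $P(X_n=j)=p_{ij}^{(n)}$.

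The main obstacle I expect is the irreducibility of the product chain, since this is the only place aperiodicity genuinely enters and it rests on the lemma that an irreducible aperiodic chain eventually has all $n$-step transition probabilities strictly positive. The remaining ingredients — that an invariant probability measure together with irreducibility forces positive recurrence, and the explicit construction of the coupled process with the correct marginals — are standard, though the latter requires some care to verify the Markov property of the joined process.
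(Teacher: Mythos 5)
Your proposal is correct and follows essentially the same Doeblin coupling argument as the paper: the independent stationary copy, the product chain made irreducible via aperiodicity and positive recurrent via the invariant product measure, a.s.\ finiteness of the coupling time, and the bound $|P(X_n=j)-\pi_j|\leq P(T>n)$. The only cosmetic difference is that you invoke the coupling inequality for the lockstep process after $T$, whereas the paper splits $P(X_n=j)$ directly at time $T$ using the strong Markov property, which amounts to the same estimate.
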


\begin{proof}
A good reference for this result is \cite{N}. However, we give the proof as it is used and modified later. Let the initial distribution be $\lambda$, and let $I$ be the state space. Choose $\{Y_{n}\}_{n\geq 0}$, such that $\{X_{n}\}_{n\geq 0}$ and $\{Y_{n}\}_{n\geq 0}$ are independent, with $\{Y_{n}\}_{n\geq 0}$ Markov $(\pi,P)$. Let $T=inf\{n\geq 1:X_{n}=Y_{n}\}$. We claim that $P(T<\infty)=1$, $(*)$. Let $W_{n}=(X_{n},Y_{n})$. Then $\{W_{n}\}_{n\geq 0}$ is a Markov chain on $I\times I$. By independence, it has transition probabilities given by;\\

$\overline{p}_{(i,j)(k,l)}=p_{ik}p_{jl}$\\

and initial distribution $\mu_{(i,j)}=\lambda_{i}\pi_{j}$. A simple calculation shows that;\\

$\overline{p}^{(n)}_{(i,j)(k,l)}=p_{ik}^{(n)}p_{jl}^{(n)}$ for fixed states $i,j,k,l$\\

As $P$ is irreducible and aperiodic, we have that $min(p_{ik}^{(n)},p_{jl}^{(n)})>0$, for sufficiently large $n$. Hence, for such $n$, $\overline{p}^{(n)}_{(i,j)(k,l)}>0$ and $\overline{P}$ is irreducible. A similar straightforward calculation gives that the distribution $\mu_{(i,j)}$ is invariant for $\overline{P}$. By well known results, this implies that $\overline{P}$ is positive recurrent. Fix a state $b$, and let $S=inf\{n\geq 1:X_{n}=Y_{n}=b\}$. Then $S$ is the first passage time in the system $\{W_{n}\}_{n\geq 0}$ to $(b,b)$, and $P(S<\infty)=1$ follows by known results, and the fact that $\overline{P}$ is irreducible and recurrent. Clearly $P(S<\infty)\leq P(T<\infty)$, so $(*)$ follows. We now calculate;\\

$P(X_{n}=j)=P(X_{n}=j,n\geq T)+P(X_{n}=j,n<T)$\\

$=P(Y_{n}=j,n\geq T)+P(X_{n}=j,n<T)$\\

by definition of $T$ and the fact that $\{X_{n}\}_{n\geq 0}$ and $\{Y_{n}\}_{n\geq 0}$ have the same transition matrix. Then;\\

$P(X_{n}=j)=P(Y_{n}=j,n\geq T)+P(Y_{n}=j,n<T)$\\

$\indent \ \ \ \ \ \ \ \ \ \ \ -P(Y_{n}=j,n<T)+P(X_{n}=j,n<T)$\\

$=P(Y_{n}=j)-P(Y_{n}=j,n<T)+P(X_{n}=j,n<T)$\\

$=\pi_{j}-P(Y_{n}=j,n<T)+P(X_{n}=j,n<T)$ $(**)$\\

We have that $P(Y_{n}=j,n<T)\leq P(n<T)$ and $P(n<T)\rightarrow P(T=\infty)=0$ as $n\rightarrow\infty$, using $(*)$. Similarly, $P(X_{n}=j,n<T)\rightarrow 0$ as $n\rightarrow\infty$. It follows that $P(X_{n}=j)\rightarrow \pi_{j}$, using $(**)$, as required. The final claim is a consequence of the fact that $p_{ij}^{(n)}=P(X_{n}=j)$ where the initial distribution of $X_{0}$ is the dirac function $\delta_{i}$.

\end{proof}

We now establish a rate of convergence result.

\begin{lemma}
\label{convergence}
Let $P$ be the transition matrix for a finite irreducible aperiodic Markov chain. Then there exists $m\geq 1$ and $\rho\in (0,1)$, such that;\\

$|p_{ij}^{(n)}-\pi_{j}|\leq (1-\rho)^{{n\over m}-1}$, for all states $i,j$\\

\end{lemma}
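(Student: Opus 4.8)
The idea is to exploit finiteness to get a *uniform* lower bound on $n$-step transition probabilities, and then iterate. First I would use irreducibility and aperiodicity of the finite chain: there exists $m \geq 1$ such that $p_{ij}^{(m)} > 0$ for \emph{all} states $i,j$ simultaneously. (Aperiodicity plus irreducibility gives, for each pair $(i,j)$, that $p_{ij}^{(n)}>0$ for all sufficiently large $n$; finiteness of $I \times I$ lets us pick a single $m$ that works for every pair.) Since $I$ is finite, set $\rho = \min_{i,j} p_{ij}^{(m)} > 0$. Replacing the chain by its $m$-step version, I may as well prove the geometric decay for $P^m$ directly and then translate back.

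The core step is a contraction estimate for the $m$-step kernel. Define, for each state $j$, $M_n(j) = \max_i p_{ij}^{(nm)}$ and $\mu_n(j) = \min_i p_{ij}^{(nm)}$. The plan is to show $M_{n+1}(j) - \mu_{n+1}(j) \leq (1-\rho)\bigl(M_n(j) - \mu_n(j)\bigr)$. The standard argument: fix states $i_1, i_2$; write $p_{i_1 j}^{((n+1)m)} - p_{i_2 j}^{((n+1)m)} = \sum_k \bigl(p_{i_1 k}^{(m)} - p_{i_2 k}^{(m)}\bigr) p_{kj}^{(nm)}$, split the sum according to the sign of $p_{i_1 k}^{(m)} - p_{i_2 k}^{(m)}$, bound the $p_{kj}^{(nm)}$ factors on the positive part by $M_n(j)$ and on the negative part by $\mu_n(j)$, and use that the positive parts of $p_{i_1 k}^{(m)} - p_{i_2 k}^{(m)}$ sum to something at most $1 - \rho$ (since both rows place mass at least $\rho$ on every state $k$, the total variation distance between the rows is at most $1-\rho$). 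This yields the claimed one-step contraction, hence $M_n(j) - \mu_n(j) \leq (1-\rho)^{n-1}\bigl(M_1(j) - \mu_1(j)\bigr) \leq (1-\rho)^{n-1}$.

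To finish, note $\pi_j$ is a convex combination of the $p_{ij}^{(nm)}$ (by $\pi = \pi P^{nm}$), so $\mu_n(j) \leq \pi_j \leq M_n(j)$, whence $|p_{ij}^{(nm)} - \pi_j| \leq M_n(j) - \mu_n(j) \leq (1-\rho)^{n-1}$ for all $i,j$. For a general exponent $N$, write $N = nm + r$ with $0 \leq r < m$ and $n = \lfloor N/m \rfloor \geq N/m - 1$; since $p^{(N)}_{ij} = \sum_k p^{(r)}_{ik} p^{(nm)}_{kj}$ and $\pi_j = \sum_k p^{(r)}_{ik}\pi_j$, the same convexity bound gives $|p_{ij}^{(N)} - \pi_j| \leq \max_k |p_{kj}^{(nm)} - \pi_j| \leq (1-\rho)^{n-1} \leq (1-\rho)^{N/m - 1}$, which is exactly the asserted inequality.

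The main obstacle is the contraction estimate itself — specifically, getting the total-variation bound $\sum_k (p_{i_1 k}^{(m)} - p_{i_2 k}^{(m)})^+ \leq 1 - \rho$ cleanly and handling the sign-splitting bookkeeping; everything before (choosing $m$ and $\rho$) and after (the convexity sandwich and the remainder term) is routine. One should also double-check the edge case where $M_1(j) - \mu_1(j)$ might need the trivial bound $1$, and that the inequality is vacuous or trivially true when $n < m$ so that the exponent $\frac{n}{m}-1$ in the statement is sensibly interpreted.
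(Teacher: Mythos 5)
Your plan is sound, but it takes a genuinely different route from the paper. The paper proves Lemma \ref{convergence} by recycling the coupling construction of Theorem \ref{coupling}: with $W_n=(X_n,Y_n)$ and the meeting time $T$, it first notes $|p_{ij}^{(n)}-\pi_j|\le P(T>n)$, then shows $P_{(k,l)}(T\le m)\ge\sum_u p^{(m)}_{ku}p^{(m)}_{lu}\ge\rho$, and by conditioning on the history up to time $km$ gets $P(T>km)\le(1-\rho)^k$, hence the bound with exponent $\lfloor n/m\rfloor\ge \frac{n}{m}-1$. You instead run the classical Doeblin--Markov contraction on the $m$-step kernel: all entries of $P^m$ are at least $\rho$, the column oscillation $M_n(j)-\mu_n(j)$ contracts by a factor $1-\rho$ per $m$-step (your sign-splitting/total-variation estimate is correct), and the sandwich $\mu_n(j)\le\pi_j\le M_n(j)$ from $\pi=\pi P^{nm}$, plus averaging over the remainder $r$ steps, finishes it. Your route avoids the coupling machinery entirely (and in fact re-proves convergence rather than quoting it), at the cost of the sign-splitting bookkeeping; the paper's route is shorter given that Theorem \ref{coupling} is already in hand.

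One piece of bookkeeping does need repair: from $M_n(j)-\mu_n(j)\le(1-\rho)^{\,n-1}$ and $n=\lfloor N/m\rfloor\ge \frac{N}{m}-1$ you only obtain exponent $\lfloor N/m\rfloor-1$, and your final inequality $(1-\rho)^{n-1}\le(1-\rho)^{N/m-1}$ is false whenever $m\nmid N$ (e.g.\ $N=m+1$ gives $(1-\rho)^0=1$ on the left). This is exactly the edge case you flagged, and it is easily fixed: start the contraction one step earlier, at $P^0=I$ with $M_0(j)-\mu_0(j)\le 1$, so that one application of the contraction already gives $M_1(j)-\mu_1(j)\le 1-\rho$ (equivalently, note $\mu_1(j)\ge\rho$ directly). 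Then $M_n(j)-\mu_n(j)\le(1-\rho)^{n}$, and the remainder argument yields $|p^{(N)}_{ij}-\pi_j|\le(1-\rho)^{\lfloor N/m\rfloor}\le(1-\rho)^{N/m-1}$, which is precisely the paper's exponent.
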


\begin{proof}
From Theorem \ref{coupling}, taking the initial distribution of $X_{0}$ to be $\delta_{i}$, we have that;\\

$P(X_{n}=j)=\pi_{j}-P(Y_{n}=j,n<T)+P(X_{n}=j,n<T)$\\

Hence;\\

$|p_{ij}^{(n)}-\pi_{j}|\leq P(n<T)$\\

As $P$ is irreducible and aperiodic, we have that $p_{kl}^{(n)}>0$ for all sufficiently large $n$, and all states $k,l$. As $P$ is finite, there exists an $m\geq 1$ such that $p_{kl}^{(m)}>0$ for all $k,l$. In particular, there exists $\rho\in (0,1)$ such that $p_{kl}^{(m)}>\rho$. We have that;\\

$P_{k,l}(T\leq m)\geq \sum_{u} \overline{p}^{(m)}_{(k,l),(u,u)}=\sum_{u}p_{ku}^{(m)}p_{lu}^{(m)}\geq \rho\sum_{u}p_{ku}^{(m)}=\rho$\\

$P_{k,l}(T>m)\leq (1-\rho)$\\

$P(T>m)=\sum_{(k,l)}P_{(k,l)}(T>m)\delta_{ik}\pi_{l}\leq (1-\rho)$\\

Moreover;\\

$P(T>n)\leq P(T>[{n\over m}]m)$\\

We claim that, for $k\geq 1$, $P(T>(k+1)m|T>km)\leq 1-\rho$. We have that;\\

$P(T>(k+1)m|T>km)$\\

$=\sum_{i_{km}\neq j_{km}, i_{km-1}\neq j_{km-1},\ldots,i,j}P(T>(k+1)m|W_{km}=(i_{km},j_{km}),W_{km-1}=(i_{km-1},j_{km-1}),\ldots, W_{0}=(i,j))P(W_{km}=(i_{km},j_{km}),W_{km-1}=(i_{km-1},j_{km-1}),\ldots, W_{0}=(i,j)|T>km)$\\

$=\sum_{i_{km}\neq j_{km}, i_{km-1}\neq j_{km-1},\ldots,i,j}P(T>(k+1)m|W_{km}=(i_{km},j_{km}))P(W_{km}=(i_{km},j_{km}),W_{km-1}=(i_{km-1},j_{km-1}),\ldots, W_{0}=(i,j)|T>km)$\\

$\leq (1-\rho)\sum_{i_{km}\neq j_{km}, i_{km-1}\neq j_{km-1},\ldots,i,j}P(W_{km}=(i_{km},j_{km}),W_{km-1}=(i_{km-1},j_{km-1}),\ldots, W_{0}=(i,j)|T>km)$\\

$=(1-\rho)$\\

Inductively, we have that;\\

$P(T>km)=P(T>km,T>(k-1)m)$\\

$=P(T>km|T>(k-1)m)P(T>(k-1)m)$\\

$\leq P(T>km|T>(k-1)m)(1-\rho)^{k-1}$\\

$\leq (1-\rho)^{k}$\\

It follows that $|p_{ij}^{(n)}-\pi_{j}|\leq (1-\rho)^{[{n\over m}]}\leq (1-\rho)^{{n\over m}-1}$\\

\end{proof}

\begin{lemma}
\label{transition}
Let $P$ define a Markov chain with $N$ states, $\{0,1,\ldots,N-1\}$ such that the transition probabilities of moving from state i to i-1,i,i+1 (mod N) respectively is ${1\over 3}$. Then $P$ is irreducible and aperiodic, moreover, we can choose $m=N-1$ and $\rho={1\over 3^{N-1}}$ in Lemma \ref{convergence}. It follows that;\\

$|p_{ij}^{(n)}-{1\over N}|\leq ({3^{N-1}-1\over 3^{N-1}})^{{n\over N-1}-1}=\epsilon_{n}$\\

If $N$ is odd, we can choose $m={N-1\over 2}$ and $\rho={1\over 3^{{N-1\over 2}}}$ in Lemma \ref{convergence}. It follows that;\\

$|p_{ij}^{(n)}-{1\over N}|\leq ({3^{{N-1\over 2}}-1\over 3^{{N-1\over 2}}})^{{2n\over N-1}-1}=\delta_{n}$\\

Moreover, for any initial probability distribution $\pi_{0}$, letting $\pi_{j}^{n}=P(X_{n}=j)$, we have that;\\

$|\pi_{j}^{n}-{1\over N}|\leq \epsilon_{n}$, $0\leq j\leq N-1$ $(*)$\\

and, similarly, with $\delta_{n}$ replacing $\epsilon_{n}$, when $N$ is odd. For any initial distribution $\lambda_{0}$ of positive numbers, with sum $K$, letting $\lambda_{n}=\lambda_{0}P^{n}$, we have that;\\

$|\lambda_{j}^{n}-{K\over N}|\leq K\epsilon_{n}$, $0\leq j\leq N-1$\\

and, similarly, with $\delta_{n}$ replacing $\epsilon_{n}$ when $N$ is odd.

\end{lemma}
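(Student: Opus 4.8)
The plan is to verify that this chain satisfies the hypotheses of Theorem \ref{coupling} and Lemma \ref{convergence}, to identify its invariant distribution, to pin down the constants $m$ and $\rho$ explicitly, and then to pass from the one-point estimates to arbitrary initial data by linearity. Throughout we may assume $N\geq 3$, so that $i-1,i,i+1$ are distinct modulo $N$.

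\emph{Structural facts.} From state $i$ one moves to $i+1$ with probability $\frac13>0$, so stepping around the cycle shows that every pair of states communicates; hence $P$ is irreducible. Since $p_{ii}=\frac13>0$ for each $i$, every state has period $1$, so $P$ is aperiodic. The states that can move into a fixed state $j$ are precisely $j-1,j,j+1\ (\mathrm{mod}\ N)$, each contributing $\frac13$, so every column of $P$ sums to $1$; thus $P$ is doubly stochastic, the uniform vector $\pi_j=\frac1N$ is invariant, and by irreducibility it is the unique invariant distribution. This is therefore the $\pi$ of Theorem \ref{coupling} and Lemma \ref{convergence}, and since the chain is finite, Lemma \ref{convergence} applies.

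\emph{Choosing $m$ and $\rho$.} The key observation is that any two states $k,l$ on the cycle are joined by a path of length $d(k,l)\leq\lfloor N/2\rfloor$, and, because $p_{ii}=\frac13$, that path can be padded with extra self-loops into a walk of any prescribed length $m\geq d(k,l)$; such a walk has probability exactly $3^{-m}$. Hence $p_{kl}^{(m)}\geq 3^{-m}$ for all $k,l$ whenever $m\geq\lfloor N/2\rfloor$. Taking $m=N-1$ gives $p_{kl}^{(m)}\geq 3^{-(N-1)}=:\rho$ (and for $N\geq 3$ one has $N-1>\lfloor N/2\rfloor$, so there is slack and the inequality is strict). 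When $N$ is odd, $\lfloor N/2\rfloor=\frac{N-1}{2}$, so taking $m=\frac{N-1}{2}$ gives $p_{kl}^{(m)}\geq 3^{-(N-1)/2}=:\rho$; here the bound may be an equality, since for the two states at maximal distance $\frac{N-1}{2}$ from $k$ the padded walk is the unique walk of that length. This is the one genuinely delicate point, and it is harmless: the proof of Lemma \ref{convergence} uses the lower bound on $p_{kl}^{(m)}$ only through the inequality $\sum_u p_{ku}^{(m)}p_{lu}^{(m)}\geq\rho\sum_u p_{ku}^{(m)}$, which requires only $p_{lu}^{(m)}\geq\rho$, so the non-strict bound is enough. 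Substituting into Lemma \ref{convergence} with $\pi_j=\frac1N$, and using $1-\rho=\frac{3^{N-1}-1}{3^{N-1}}$ (resp.\ $\frac{3^{(N-1)/2}-1}{3^{(N-1)/2}}$) together with $\frac{n}{m}-1=\frac{n}{N-1}-1$ (resp.\ $\frac{2n}{N-1}-1$), gives exactly $|p_{ij}^{(n)}-\frac1N|\leq\epsilon_n$, and $|p_{ij}^{(n)}-\frac1N|\leq\delta_n$ when $N$ is odd.

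\emph{Arbitrary initial data.} For a probability vector $\pi_0$ one has $\pi_j^n=\sum_i(\pi_0)_i\,p_{ij}^{(n)}$ and $\sum_i(\pi_0)_i=1$, hence $\pi_j^n-\frac1N=\sum_i(\pi_0)_i\bigl(p_{ij}^{(n)}-\frac1N\bigr)$, and the triangle inequality with the previous estimate gives $|\pi_j^n-\frac1N|\leq\sum_i(\pi_0)_i\,\epsilon_n=\epsilon_n$ (and $\leq\delta_n$ when $N$ is odd). For a vector $\lambda_0$ of positive entries with $\sum_i(\lambda_0)_i=K$ the identical computation applied to $\lambda_n=\lambda_0P^n$ gives $\lambda_j^n-\frac{K}{N}=\sum_i(\lambda_0)_i\bigl(p_{ij}^{(n)}-\frac1N\bigr)$, whence $|\lambda_j^n-\frac{K}{N}|\leq K\epsilon_n$ (resp.\ $K\delta_n$). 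The main obstacle in the argument is really just the padding construction producing walks of a prescribed length, together with the attendant equality case in the odd $N$ situation; everything else is routine bookkeeping.
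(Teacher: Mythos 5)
Your proposal is correct and follows essentially the same route as the paper's (much terser) proof: establish irreducibility and aperiodicity from the positive self-loop and cycle moves, obtain $p_{kl}^{(m)}\geq 3^{-m}$ by padding a shortest path with self-loops so that $m=N-1$, $\rho=3^{-(N-1)}$ (and $m=\frac{N-1}{2}$, $\rho=3^{-(N-1)/2}$ for odd $N$) work in Lemma \ref{convergence}, and then pass to general initial data by linearity and rescaling by $K$. Your extra observations — that the uniform distribution is invariant because $P$ is doubly stochastic, and that in the odd case one only gets $p_{kl}^{(m)}\geq\rho$ rather than the strict inequality stated in Lemma \ref{convergence}, which is harmless since its proof only uses the non-strict bound — are details the paper passes over silently, and they strengthen rather than alter the argument.
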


\begin{proof}
The first two claims follow immediately by noting that for any $2$ states $k,l$ $p_{kl}^{((N-1)}\geq {1\over 3^{N-1}}$, and the transition probabilities $p_{kk}>0$, for all states $k$. This also shows that we can choose $m=N-1$ and $\rho={1\over 3^{N-1}}$. The final claim of the first part follows from Lemma \ref{convergence}. The case when $N$ is odd is similar. The penultimate claim follows by noting that $\pi_{n}=\pi_{0}P^{n}$ and calculating;\\

$\pi_{j}^{n}=\pi_{0}^{0}p_{0j}^{(n)}+\pi_{1}^{0}p_{1j}^{(n)}+\ldots+\pi_{N-1}^{0}p_{N-1,j}^{(n)}$\\

$=(\pi_{0}^{0}+\ldots+\pi_{N-1}^{0})({1\over N})+\pi_{0}^{0}\epsilon_{n}^{0}+\ldots+\pi_{N-1}^{0}\epsilon_{n}^{N-1}$\\

$={1\over N}+\epsilon_{n}'$\\

where $\epsilon_{n}^{j}\leq \epsilon_{n}$, for $0\leq j\leq N-1$ and $\epsilon_{n}'\leq\epsilon_{n}$. The case $N$ odd is the same. The final claim follows by observing that $\lambda_{n}=\lambda_{0}P^{n}=K\pi_{0}P^{n}$ for an initial probability distribution $\pi_{0}$. Then multiply the result $(*)$ through by $K$, similarly for $N$ odd.   \\

\end{proof}

\begin{lemma}
\label{nonstandard1}
Let $P$ define a non standard Markov chain with $\eta$ states, $\{0,1,\ldots,\eta-1\}$, for $\eta$ infinite, such that the transition probabilities of moving from state i to i-1,i,i+1 (mod $\eta$) respectively is ${1\over 3}$. Then, if $\epsilon$ is an infinitesimal and\\

 $n\geq (\eta-1)(1+{log(\epsilon)\over log(3^{\eta-1}-1)-log(3^{\eta-1})}$ $(*)$\\

we have for any initial probability distribution $\pi_{0}$, that;\\

$\pi_{j}^{n}\simeq{1\over\eta}$ for $0\leq j\leq \eta-1$ $(**)$\\

We obtain the same result, if $\eta$ is odd, $\epsilon$ is an infinitesimal and;\\

$n\geq {\eta-1\over 2}(1+ {log(\epsilon)\over log(3^{{\eta-1\over 2}}-1)-log(3^{{\eta-1\over 2}})})$ $(***)$\\

for any initial probability distribution $\pi_{0}$. If $\lambda_{0}$ is a nonstandard distribution with sum $\lambda$, possibly infinite, then if $\epsilon$ is an infinitesimal with $\lambda\epsilon\simeq 0$, and $n$ satisfies $(*)$, we obtain that;\\

$\pi_{j}^{n}\simeq{\lambda\over\eta}$ for $0\leq j\leq \eta-1$ $(****)$\\

and, the same result holds when $\eta$ is odd and $n$ satisfies $(***)$.

\end{lemma}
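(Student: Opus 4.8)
The plan is to transfer the finitary estimate of Lemma \ref{transition} to the nonstandard setting by the transfer principle, and then show that the hypothesis on $n$ forces the error term to be infinitesimal. First I would observe that Lemma \ref{transition} is a first-order statement about finite $N$, so by transfer it holds for the infinite hyperinteger $\eta$: for any internal initial probability distribution $\pi_{0}$ on $\{0,1,\ldots,\eta-1\}$ we have $|\pi_{j}^{n}-\tfrac{1}{\eta}|\leq \epsilon_{n}$ where $\epsilon_{n}=\left(\tfrac{3^{\eta-1}-1}{3^{\eta-1}}\right)^{\frac{n}{\eta-1}-1}$, and similarly with $\delta_{n}$ when $\eta$ is odd. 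So the whole content of $(**)$ reduces to checking that $\epsilon_{n}\simeq 0$ whenever $n$ satisfies $(*)$.

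Next I would do exactly that computation. Write $r=\tfrac{3^{\eta-1}-1}{3^{\eta-1}}\in(0,1)$, so $\log r=\log(3^{\eta-1}-1)-\log(3^{\eta-1})<0$. The condition $(*)$ is precisely $\frac{n}{\eta-1}-1\geq \frac{\log\epsilon}{\log r}$, and since $\log r<0$ this is equivalent to $\left(\tfrac{n}{\eta-1}-1\right)\log r\leq \log\epsilon$, i.e. $\log\epsilon_{n}\leq\log\epsilon$, i.e. $\epsilon_{n}\leq\epsilon$. Since $\epsilon$ is infinitesimal and $0<\epsilon_{n}\leq\epsilon$, we get $\epsilon_{n}\simeq 0$, hence $\pi_{j}^{n}\simeq\tfrac{1}{\eta}$ for every $j$, which is $(**)$. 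The odd case is identical with $r$ replaced by $\tfrac{3^{(\eta-1)/2}-1}{3^{(\eta-1)/2}}$ and exponent $\tfrac{2n}{\eta-1}-1$, using $(***)$.

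For the last claim, I would again transfer the corresponding statement of Lemma \ref{transition}: for an internal distribution $\lambda_{0}$ of positive hyperreals with (internal) sum $\lambda$, writing $\lambda_{n}=\lambda_{0}P^{n}$, we have $|\lambda_{j}^{n}-\tfrac{\lambda}{\eta}|\leq\lambda\epsilon_{n}$. (I would keep the paper's notation and read $\pi_{j}^{n}$ in $(****)$ as $\lambda_{j}^{n}$, the $j$-th entry of $\lambda_{0}P^{n}$.) By the previous paragraph $\epsilon_{n}\leq\epsilon$, so $\lambda\epsilon_{n}\leq\lambda\epsilon\simeq 0$ by hypothesis, giving $\lambda_{j}^{n}\simeq\tfrac{\lambda}{\eta}$; the odd case uses $\delta_{n}$ and $(***)$ in the same way.

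The main thing to be careful about is the direction of the inequality when dividing by $\log r<0$ and the fact that transfer applies only to internal objects, so I would state explicitly that $\pi_{0}$, $\lambda_{0}$, $n$, $\eta$, $\epsilon$ are all taken internal (indeed $\epsilon$ and the entries of $\pi_{0}$ are hyperreals, $n,\eta$ hyperintegers). A minor point worth a remark is that $(*)$ as printed has an unbalanced parenthesis; I would read it as the displayed inequality $\frac{n}{\eta-1}-1\geq\frac{\log\epsilon}{\log(3^{\eta-1}-1)-\log(3^{\eta-1})}$, which is the only reading that makes the argument go through. No genuinely hard step remains once transfer is invoked; the lemma is essentially a repackaging of Lemma \ref{transition} plus a one-line logarithm manipulation.
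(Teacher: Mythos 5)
Your proposal is correct and follows essentially the same route as the paper: the paper also proves the lemma by transferring the bound of Lemma \ref{transition} and then taking $\epsilon$ infinitesimal and $\eta$ infinite. The only difference is bookkeeping — the paper performs the logarithm rearrangement before transfer and devotes most of its proof to an explicit internal encoding (the sets $Prob_{N}$ and the maps $G,H,L$) certifying that the transferred statement is first order and that the hyperfinite chain evolves by internal matrix multiplication, a point you handle only with your closing remark that all data are taken internal.
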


\begin{proof}
Let $Seq_{1}=\{f:\mathcal{N}\rightarrow\mathcal{R}\}$ and $Seq_{2}=\{f:\mathcal{N}^{2}\rightarrow\mathcal{R}\}$. We let;\\

\noindent $Prob_{N}=\{f\in Seq_{1}:(\forall_{m\geq N+1}f(m)=0)\wedge(\forall_{1\leq m\leq N}f(m)\geq 0)\wedge\sum_{1\leq m\leq N}f(m)=1\}$.\\

encode probability vectors of length $N$. Let $G:\mathcal{N}\rightarrow Seq_{2}$ be defined by;\\

$G(N,1,1)=G(N,1,2)=G(N,1,N)={1\over 3}$, $(\forall_{m\neq 1,2,N}G(N,1,m)=0$\\

$(\forall_{1\leq k\leq N-1}\forall_{2\leq m\leq N}G(N,k+1,1)=G(N,k,N)$, $G(N,k+1,m)=G(N,k,m-1)$\\

$\forall_{k\geq N+1}\forall_{m\geq N+1}G(N,k,m)=0$\\

$G$ encodes the transition matrices for the given Markov chain with $N$ states. Let $H:\mathcal{N}^{2}\rightarrow Seq_{2}$ be defined by;\\

$(\forall_{1\leq i,j\leq N})H(1,N,i,j)=G(N,i,j)$\\

$(\forall_{i,j>N})H(1,N,i,j)=0$\\

$(\forall_{1\leq i,j\leq N})H(n,N,i,j)=\sum_{1\leq k\leq N}H(n-1,i,k)G(N,k,j)$\\

$(\forall_{i,j>N})H(n,N,i,j)=0$\\

$H$ encodes the powers $G(N)^{n}$ of the transition matrices. We define maps $L(N,n):Prob_{N}\rightarrow Prob_{N}$ by;\\

$(\forall_{1\leq j\leq N})L(N,n)(f)(j)=\sum_{1\leq k\leq N}f(k)H(n,N,k,j)$\\

$(\forall_{j>N})L(N,n)(f)(j)=0$\\

$L(N,n)(f)$ encodes the probability vectors $\pi^{n}$ for an initial distribution $\pi_{0}$ represented by $f$.\\

By a simple rearrangement, we have that the bound in $|\pi_{j}^{n}-{1\over N}|$, from lemma \ref{transition}, can be formulated in first order logic as;\\

$\forall N\in\mathcal{N}\forall\pi\in Prob_{N}\forall\epsilon\in\mathcal{R}_{>0}\forall{n\in\mathcal{N}}(n\geq (N-1)(1+{log(\epsilon)\over log(3^{N-1}-1)-log(3^{N-1})})\rightarrow(|L(n,N)(\pi)(j)-{1\over N}|\leq\epsilon, 0\leq j\leq N-1)$\\

By transfer, we obtain a corresponding result, quantifying over ${^{*}\mathcal{N}}$. Taking $\epsilon$ to be an infinitesimal and $\eta$ to be an infinite natural number, we obtain the first result. Observe that by construction of $G,H,L$, the nonstandard Markov chain with $\eta$ states evolves by the usual nonstandard matrix multiplication by the transition matrix, of the initial probability distribution. The remaining claims are similar and left to the reader.

\end{proof}

\begin{defn}
\label{measures}
We let ${\eta,\lambda}\subset {{^{*}N}\setminus\mathcal{N}}$, and let $\nu$ satisfy the bound $(*)$ in Lemma \ref{nonstandard}, where $\epsilon$ is an infinitesimal with with $\lambda\epsilon\simeq 0$.\\

We let $\overline{\Omega_{\eta}}=\{x\in{^{*}\mathcal{R}}:0\leq x<1\}$\\

and $\overline{\mathcal{T}_{\nu}}=\{t\in{^{*}\mathcal{R}}:0\leq x\leq 1\}$\\

We let $\mathcal{C}_{\eta}$ consist of internal unions of the intervals $[{i\over\eta},{i+1\over\eta})$, for $0\leq i\leq \eta-1$, and let $\mathcal{D}_{\nu}$ consist of internal unions of $[{i\over\nu},{i+1\over\nu})$ and $\{1\}$, for $0\leq i\leq \nu-1$.\\

We define counting measures $\mu_{\eta}$ and $\lambda_{\nu}$ on $\mathcal{C}_{\eta}$ and $\mathcal{D}_{\nu}$ respectively, by setting $\mu_{\eta}([{i\over\eta},{i+1\over\eta}))={1\over\eta}$, $\lambda_{\nu}([{i\over\nu},{i+1\over\nu}))={1\over\nu}$, for $0\leq i\leq \eta-1$, $0\leq i\leq \nu-1$ respectively, and $\lambda_{\nu}(\{1\})=0$.\\

We let $(\overline{\Omega_{\eta}},\mathcal{C}_{\eta},\mu_{\eta})$ and $(\overline{\mathcal{T}}_{\nu},\mathcal{D}_{\nu},\lambda_{\nu})$ be the resulting ${*}$-finite measure spaces, in the sense of \cite{L}. We let $(\overline{\Omega_{\eta}}\times\overline{\mathcal{T}}_{\nu}, \mathcal{C}_{\eta}\times\mathcal{D}_{\nu}, \mu_{\eta}\times \lambda_{\nu})$ denote the corresponding product space.
\end{defn}

\begin{defn}
\label{initial}
Let $f:\overline{\Omega_{\eta}}\rightarrow {^{*}\mathcal{R}}_{\geq 0}$  be measurable with respect to the $*\sigma$-algebra $\mathcal{C}_{\eta}$, in the sense of \cite{L}, and suppose that ${^{*}\sum_{0\leq i\leq\eta-1}}f({i\over\eta})=\lambda$. We define $F:\overline{\Omega_{\eta}}\times \overline{\mathcal{T}_{\nu}}\rightarrow{^{*}\mathcal{R}}_{\geq 0}$ by;\\

$F({i\over\eta},{j\over\nu})=(\pi_{f}K^{j})(i)$, for $0\leq i\leq \eta-1$, $0\leq j\leq \nu$\\

$F(x,y)=F({[x\eta]\over\eta},{[y\nu]\over\nu})$, $(x,y)\in \overline{\Omega_{\eta}}\times \overline{\mathcal{T}_{\nu}}$\\

where $\pi_{f}$ is the nonstandard distribution vector corresponding to $f$, $K$ is the transition matrix of the above Markov chain with $\eta$ states, and $K^{j}$ denotes a nonstandard power.

\end{defn}

\begin{lemma}
\label{measurable}
Let $F$ be as defined in Definition \ref{initial}, then $F$ is measurable with respect to $\mathcal{C}_{\eta}\times\mathcal{D}_{\nu}$, and, moreover $F(x,1)\simeq C$ where $C=\int_{\overline{\Omega_{\eta}}}fd\mu_{\eta}$.\\
\end{lemma}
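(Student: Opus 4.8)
The plan is to establish measurability of $F$ first, then identify the limiting value at $t=1$ using the rate-of-convergence bound from Lemma~\ref{nonstandard1}. For measurability, I would argue as follows. By construction in Definition~\ref{initial}, $F$ is constant on each rectangle $[{i\over\eta},{i+1\over\eta})\times[{j\over\nu},{j+1\over\nu})$ for $0\le i\le\eta-1$, $0\le j\le\nu-1$, and also on $[{i\over\eta},{i+1\over\eta})\times\{1\}$ (identifying ${\nu\over\nu}$ with the point $1$). Since $F$ takes only the internally-indexed values $(\pi_f K^j)(i)$, its preimage of any internal set of $^*\mathcal{R}$ is an internal union of such rectangles, hence lies in $\mathcal{C}_\eta\times\mathcal{D}_\nu$. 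This is the routine part: one invokes the internal definition principle to see that the map $(i,j)\mapsto(\pi_f K^j)(i)$ is internal, so the level sets of $F$ are internal, and internal subsets of $\overline{\Omega_\eta}\times\overline{\mathcal{T}_\nu}$ that are unions of the generating rectangles lie in the $*$-algebra $\mathcal{C}_\eta\times\mathcal{D}_\nu$ by definition of the product $*$-finite structure (cf.\ \cite{L}).

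For the second assertion, fix $x\in\overline{\Omega_\eta}$ and set $i=[x\eta]$, so $F(x,1)=(\pi_f K^\nu)(i)=\lambda_i^\nu$ in the notation of Lemma~\ref{transition}, where $\lambda_0$ is the nonstandard distribution $\pi_f$ with sum $\lambda={^*\sum_{0\le k\le\eta-1}}f({k\over\eta})$. By transfer of the final inequality of Lemma~\ref{transition} (the $K$-weighted bound $|\lambda_j^n-{K\over N}|\le K\epsilon_n$), applied with $N=\eta$, $K=\lambda$, $n=\nu$, we get $|\lambda_i^\nu-{\lambda\over\eta}|\le\lambda\epsilon_\nu$, where $\epsilon_\nu=({3^{\eta-1}-1\over 3^{\eta-1}})^{{\nu\over\eta-1}-1}$. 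Because $\nu$ satisfies the bound $(*)$ of Lemma~\ref{nonstandard1} with an infinitesimal $\epsilon$ for which $\lambda\epsilon\simeq 0$, rearranging that bound gives $\epsilon_\nu\le\epsilon$, hence $\lambda\epsilon_\nu\le\lambda\epsilon\simeq 0$. Therefore $F(x,1)\simeq{\lambda\over\eta}$.

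It remains to match ${\lambda\over\eta}$ with $C=\int_{\overline{\Omega_\eta}}f\,d\mu_\eta$. Since $\mu_\eta$ assigns mass ${1\over\eta}$ to each interval $[{k\over\eta},{k+1\over\eta})$ on which $f$ is constant with value $f({k\over\eta})$, the $*$-finite integral is ${^*\sum_{0\le k\le\eta-1}}f({k\over\eta})\cdot{1\over\eta}={\lambda\over\eta}$; one then takes standard parts (the Loeb integral of $f$ agrees with the standard part of this $*$-finite sum, by the hypothesis that $f$ is $\mathcal{C}_\eta$-measurable and the defining property of $C$). Combining, $F(x,1)\simeq{\lambda\over\eta}=C$ for every $x$, which is the claim.

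The main obstacle is the bookkeeping at $t=1$: one must be careful that $K^\nu$ really is the $\nu$-th nonstandard power (so that transfer of Lemma~\ref{transition} applies verbatim with $n=\nu$), and that the index $\nu$ — rather than $\nu-1$ or $\nu+1$ from the $\{1\}$-atom convention in $\mathcal{D}_\nu$ — is the one fed into the convergence estimate; and one must verify that $\lambda\epsilon\simeq 0$ genuinely forces $\lambda\epsilon_\nu\simeq 0$ via the inequality $\epsilon_\nu\le\epsilon$ coming from $(*)$, which requires the logarithmic rearrangement to go through even when $\lambda$ is infinite. The measurability part and the identification of $C$ with the $*$-finite sum are routine given the framework of \cite{L}.
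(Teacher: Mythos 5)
Your proposal is correct and follows essentially the same route as the paper: measurability via the internal defining schema for $F$ (transfer from the finite case), the identification $C=\lambda/\eta$ from the $*$-finite counting integral, and the approximation $F(x,1)\simeq\lambda/\eta$ from the equilibrium bound with $n=\nu$ satisfying $(*)$ and $\lambda\epsilon\simeq 0$. The only cosmetic difference is that you re-derive the statement $(****)$ of Lemma \ref{nonstandard1} by transferring the weighted bound of Lemma \ref{transition}, where the paper simply cites that lemma (and your parenthetical about taking standard parts is unnecessary, since $C$ is exactly the $*$-integral $\lambda/\eta$).
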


\begin{proof}
The first proposition follows by observing that the defining schema for $F$ is internal and by transfer from the result for finite measures spaces, see Lemma \label{nonstandard} for the mechanics of this transfer process. For the second proposition, observe that, by definition of the nonstandard integral, see \cite{dep}, and the assumptions on $f$, we have that $C={\lambda\over\eta}$. The result then follows by the choice of $\nu$ and the result of Lemma \label{nonstandard}.

\end{proof}

\begin{rmk}
\label{gas}
$F$ defines the evolution of a stochastic process, which we can think of as the density of a number of gas particles each moving independently and at random. This idea is made more precise in \cite{dep1}. The final density, which we refer to as the equilibrium density is close to being constant.
\end{rmk}

\begin{defn}
\label{reverse}
Let $(\overline{\Omega_{\eta}},\mathcal{E}_{\eta},\gamma_{\eta})$ be a nonstandard $*$-finite measure space. We define a reverse filtration on $\overline{\Omega_{\eta}}$ to be an internal collection of $*\sigma$-algebras $\mathcal{E}_{\eta,i}$, indexed by $0\leq i\leq\nu$, such that;\\

(i). $\mathcal{E}_{\eta,0}=\mathcal{E}_{\eta}$\\

(ii). $\mathcal{E}_{\eta,i}\subseteq \mathcal{E}_{\eta,j}$, if $i\geq j$.\\

We say that $F:\overline{\Omega_{\eta}}\times\overline{\mathcal{T}_{\nu}}\rightarrow {^{*}\mathcal{R}}$ is adapted to the filtration if $F$ is measurable with respect to $\mathcal{E}_{\eta}\times \mathcal{D}_{\nu}$ and $F_{i\over\nu}:\overline{\Omega_{\eta}}\rightarrow{^{*}\mathcal{R}}$ is measurable with respect to $\mathcal{E}_{\eta,i}$, for $0\leq i\leq \nu$.\\

If $f:\overline{\Omega_{\eta}}\rightarrow{^{*}\mathcal{R}}$ is measurable with respect to $\mathcal{E}_{\eta,j}$ and $0\leq j\leq i\leq \nu$, we define the conditional expectation $E_{\eta}(f|\mathcal{E}_{\eta,i})$ to be the unique $g:\overline{\Omega_{\eta}}\rightarrow{^{*}\mathcal{R}}$ such that $g$ is measurable with respect to $\mathcal{E}_{\eta,i}$ and;\\

$\int_{U}gd\gamma_{\eta}=\int_{U}f d\gamma_{\eta}$\\

for all $U\in \mathcal{E}_{\eta,i}$. We say that  $F:\overline{\Omega_{\eta}}\times\overline{\mathcal{T}_{\nu}}\rightarrow {^{*}\mathcal{R}}$ is a reverse martingale if;\\

(i). $F$ is adapted to the reverse filtration on $\overline{\Omega_{\eta}}$\\

(ii). $E_{\eta}(F_{j\over\nu}|\mathcal{E}_{\eta,i})=F_{i\over\nu}$ for $0\leq j\leq i\leq \nu$\\

Given $F:\overline{\Omega_{\eta}}\times\overline{\mathcal{T}_{\nu}}\rightarrow {^{*}\mathcal{R}}$ measurable with respect to $\mathcal{E}_{\eta}\times \mathcal{D}_{\nu}$, we define the cumulative density function; $P:{^{*}\mathcal{R}}\times [0,1]\rightarrow{^{*}\mathcal{R}}$ by;\\

$P(x,t)=\gamma_{\eta}(F_{t}\leq x)$\\

We say that $F_{1}$ and $F_{2}$ are equivalent in distribution, if their respective cumulative density functions $P_{1}$ and $P_{2}$ coincide.\\

\end{defn}

\begin{theorem}
\label{martingale}
Let $F$ be as in Definition \ref{initial}, then there exists a reverse filtration on $\overline{\Omega_{\eta}}$ and an extension of $F$ to $\overline{F}$ such that $\overline{F}$ is a reverse martingale. Moreover the processes $F$ and $\overline{F}$ are equivalent in distribution.

\end{theorem}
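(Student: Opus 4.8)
The plan is to build the reverse filtration directly from the Markov structure of $F$. Recall that $F_{j/\nu}$ is the probability vector $\pi_f K^j$, so as $j$ increases the distribution is pushed forward by the transition matrix $K$ and ``spreads out'' toward the uniform vector; running time \emph{backwards} is therefore the natural direction in which information is coarsened, which is why a \emph{reverse} martingale is the right object. Concretely, I would let $\mathcal{E}_{\eta}=\mathcal{C}_{\eta}$, the full internal algebra generated by the singletons $\{i/\eta\}$, and for each $0\le i\le\nu$ define $\mathcal{E}_{\eta,i}$ to be the internal $*\sigma$-subalgebra of $\mathcal{C}_\eta$ generated by the level sets of the map $x\mapsto$ (the position of a particle started under $\pi_f$ and run for $i$ steps) — equivalently, the algebra of ``preimages under $K^i$''. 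More workably, one enlarges the probability space: replace $\overline{\Omega_\eta}$ by the internal path space $\overline{\Omega_\eta}^{\,\nu+1}$ of length-$(\nu+1)$ trajectories $(\omega_0,\dots,\omega_\nu)$ of the chain with initial law $\pi_f$, carrying the obvious internal path measure, let $\overline{F}(\omega,j/\nu)$ depend only on $\omega_j$ via the density $F_{j/\nu}$ reweighting, and let $\mathcal{E}_{\eta,i}=\sigma(\omega_i,\omega_{i+1},\dots,\omega_\nu)$. Then (i) of Definition \ref{reverse} holds since $\mathcal{E}_{\eta,0}$ is the full algebra, and (ii) holds because conditioning on a longer tail $\omega_i,\dots,\omega_\nu$ refines conditioning on a shorter tail $\omega_j,\dots,\omega_\nu$ when $i\le j$ — wait, I must be careful about the direction of the index, so I would fix the convention so that (ii) of Definition \ref{reverse}, $\mathcal{E}_{\eta,i}\subseteq\mathcal{E}_{\eta,j}$ for $i\ge j$, matches ``bigger index $=$ shorter tail $=$ coarser''; with that convention the tail algebras $\sigma(\omega_i,\dots,\omega_\nu)$ decrease as $i$ increases, as required.

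Next I would verify the reverse martingale identity $E_\eta(\overline{F}_{j/\nu}\mid\mathcal{E}_{\eta,i})=\overline{F}_{i/\nu}$ for $0\le j\le i\le\nu$. This is the Markov-chain computation: on the path space, $E(g(\omega_j)\mid\omega_i,\dots,\omega_\nu)=E(g(\omega_j)\mid\omega_i)$ by the (reversed) Markov property, and the key algebraic fact is that the backward transition kernel of a Markov chain, against the law of the chain at the given time, is a genuine stochastic kernel, so that $\sum$ of the forward powers telescopes correctly; in matrix terms this is exactly the statement that $(\pi_f K^{j}) = (\pi_f K^{i})\cdot(\text{backward kernel})^{\,i-j}$ and that applying conditional expectation undoes the last $i-j$ backward steps. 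Since everything here is internal and finite-dimensional ($\eta$ and $\nu$ are fixed nonstandard integers), this is a transfer of the finite statement that the density process of a Markov chain, read backwards, is a reverse martingale with respect to the tail filtration — a standard finite fact — so I would phrase it as: formulate the finite-$N$, finite-$n$ version in first-order logic exactly as in the proof of Lemma \ref{nonstandard1}, and transfer.

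Finally, equivalence in distribution of $F$ and $\overline{F}$: by Definition \ref{reverse} this means their cumulative density functions $P(x,t)=\gamma_\eta(F_t\le x)$ agree. Here the point is that the push-forward of the path measure onto the coordinate $\omega_j$ is precisely $\pi_f K^j$, which is the same data that defines $F_{j/\nu}$ on $(\overline{\Omega_\eta},\mathcal{C}_\eta,\mu_\eta)$ up to the normalization built into Definition \ref{initial}; so for each dyadic time $j/\nu$ the two random variables $F_{j/\nu}$ and $\overline{F}_{j/\nu}$ have the same law, hence the same cumulative density function, and since both $F$ and $\overline F$ are (by construction and by Lemma \ref{measurable}) constant on the $\mathcal{D}_\nu$-atoms in the time variable, agreement at the dyadic times gives agreement of $P_1$ and $P_2$ everywhere. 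I expect the main obstacle to be bookkeeping rather than depth: getting the index/time-direction conventions in Definition \ref{reverse} to line up with ``backward in time $=$ coarser algebra,'' and checking that the backward kernel is well-defined and stochastic for \emph{every} time $j$ (which uses that each $\pi_f K^j$ has full support, guaranteed since $K$ is irreducible aperiodic and, after one step, strictly positive, cf. Lemma \ref{transition}), so that the conditional expectations in Definition \ref{reverse} exist and are unique. Once those conventions are pinned down, each of the three parts is a short transfer of a standard finite computation.
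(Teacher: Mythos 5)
Your strategy -- realize $\overline{F}$ on a path space of the chain and take the tail algebras $\sigma(\omega_i,\dots,\omega_\nu)$ as the reverse filtration -- is essentially the paper's mechanism in disguise (the paper codes the backward trajectories into nested ternary subintervals of $\overline{\Omega_\eta}$, so that $\mathcal{E}_{\eta,i}$ is generated by intervals of width $1/(3^{\nu-i}\eta)$ and the tail algebras become interval algebras), but your choice of path measure is wrong, and with it the central identity fails. If $(\omega_0,\dots,\omega_\nu)$ carries the law of the chain started from $\pi_f$ and $\overline{F}(\omega,j/\nu)=F_{j/\nu}(\omega_j)$, then with $\mu_j=\pi_fK^j/\lambda$ one gets $E\bigl(\overline{F}_{j/\nu}\mid \omega_{j+1}=y\bigr)=\sum_x \frac{\mu_j(x)K(x,y)}{\mu_{j+1}(y)}\,F_{j/\nu}(x)=\frac{\lambda}{\mu_{j+1}(y)}\sum_x \mu_j(x)^2K(x,y)$, which is in general not equal to $F_{(j+1)/\nu}(y)=\lambda\mu_{j+1}(y)$: already for two states, $K$ the constant $\frac12$ kernel and $\mu_0=(\frac13,\frac23)$, the left side is $\frac59$ while the right side is $\frac12$. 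The backward-kernel identity you invoke, $\pi_fK^{j}=(\pi_fK^{i})\hat{K}^{\,i-j}$, is a statement about the action of $\hat{K}$ on measures; the reverse-martingale property needs the action of $\hat{K}$ on the \emph{function} $F_{j/\nu}$, and the two coincide only when the marginals entering $\hat{K}$ are uniform. So the auxiliary chain must be run in its stationary regime: take $\omega_\nu$ uniform on the $\eta$ states and i.i.d.\ uniform $\frac13,\frac13,\frac13$ backward steps. Then $F_{j/\nu}(\omega_j)$ is a reverse martingale, and the verification is exactly the paper's one-line computation $\frac13\bigl(F_{i/\nu}(\frac{i_x}{\eta})+F_{i/\nu}(\frac{i_x-1}{\eta})+F_{i/\nu}(\frac{i_x+1}{\eta})\bigr)=F_{(i+1)/\nu}(\frac{i_x}{\eta})$.

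The same correction is needed for your equidistribution argument: the law of $\overline{F}_{j/\nu}$ is computed against the marginal of $\omega_j$ under the path measure, and it agrees with the law of $F_{j/\nu}$ on $(\overline{\Omega_\eta},\mathcal{C}_\eta,\mu_\eta)$ precisely because, in the stationary construction, that marginal is uniform; your remark that "the push-forward onto $\omega_j$ is $\pi_fK^j$" conflates the law of the random variable $F_{j/\nu}(\omega_j)$ with the values of the density $F_{j/\nu}$, and with the $\pi_f$-initialized measure the two cumulative density functions would genuinely differ. Finally, the theorem as stated asks for a reverse filtration on $\overline{\Omega_\eta}$ itself and an extension of $F$ on the same underlying space; to get that literally you must transport the path space back onto $\overline{\Omega_\eta}$, which is what the paper's special-point/ternary-subdivision coding accomplishes, and its induction showing that each original point at time $\frac{\nu-j}{\nu}$ receives exactly $3^{j}$ special points (uniformity of the backward marginals again) is what makes the cumulative density functions coincide.
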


\begin{proof}
We define the reverse filtration, by setting $\mathcal{E}_{\eta,i}$ to be internal unions of the intervals $[{j\over 3^{\nu-i}\eta},{j+1\over 3^{\nu-i}\eta})$ for $0\leq j\leq 3^{\nu-i}\eta-1$, $0\leq i\leq \nu$. Clearly, this is an internal collection. It follows that $\mathcal{E}_{\eta}=\mathcal{E}_{\eta,0}$ consists of internal unions of the intervals $[{j\over 3^{\nu}\eta},{j+1\over 3^{\nu}\eta})$ for $0\leq j\leq 3^{\nu}\eta-1$, and we define the corresponding measure $\gamma_{\eta}$ by setting $\gamma_{\eta}([{j\over 3^{\nu}\eta},{j+1\over 3^{\nu}\eta}))={1\over 3^{\nu}\eta}$. Observe that $\mathcal{E}_{\eta,\nu}=\mathcal{C}_{\eta}$, the original $*\sigma$-algebra. We define special points of the first and second kind on $\overline{\Omega_{\eta}}\times \overline{\mathcal{T}_{\nu}}$ inductively, as follows;\\

Base case; the points $\{({i\over\eta},1):0\leq i\leq \eta-1\}$ are special points of the first kind.\\

Inductive hypothesis; Suppose the special points of the first and second kind on $\overline{\Omega_{\eta}}\times[{\nu-i\over\nu},1]$ have been defined for $0\leq i\leq \nu-1$\\

Then a point $(x,{\nu-i-1\over\nu})$ is special of the first kind if $(x,{\nu-i\over\nu})$ is special of the first or second kind, in addition, the points $(x+{k\over 3^{i+1}\eta},{\nu-i-1\over\nu})$, $1\leq k\leq 2$, are special of the second kind, when $(x,{\nu-i\over\nu})$ is special of the first kind.\\

We call a point of the form $({i\over\eta},{j\over\nu})$ for $0\leq i\leq \eta-1$, $0\leq j\leq\nu$ original. We associate original points to special points inductively as follows;\\

Base case; We associate the original point $({i\over\eta},1)$ to the special point $({i\over\eta},1)$, for $0\leq i\leq \eta-1$\\

Inductive hypothesis: Suppose we have associated original points to special points on $\overline{\Omega_{\eta}}\times[{\nu-i\over\nu},1]$,for $0\leq i\leq \nu-1$.\\

Then if $(x,{\nu-i-1\over\nu})$ is special of the first kind, and $({i_{x}\over\eta},{\nu-i\over\nu})$ is associated to $(x,{\nu-i\over\nu})$, we associate $({i_{x}\over\eta},{\nu-i-1\over\nu})$ to $(x,{\nu-i-1\over\nu})$. We match original points to special points of the second kind, by associating $({i_{x}-1\over\eta},{\nu-i-1\over\nu})$ to $(x+{1\over 3^{i+1}\eta},{\nu-i-1\over\nu})$ and $({i_{x}+1\over\eta},{\nu-i-1\over\nu})$ to $(x+{2\over 3^{i+1}\eta},{\nu-i-1\over\nu})$, where $(x,{\nu-i-1\over\nu})$ is special of the first kind. We adopt the convention that ${-1\over\eta}={\eta-1\over\eta}$ and ${\eta\over\eta}={0\over\eta}$.\\

We define $\overline{F}$ on $\overline{\Omega_{\eta}}\times\overline{\mathcal{T}_{\nu}}$ by setting $\overline{F}(z)=F(\Gamma(z))$, $(*)$, where $z$ is a special point of $\overline{\Omega_{\eta}}\times\overline{\mathcal{T}_{\nu}}$ and $\Gamma(z)$ is the associated original point. We then set $\overline{F}(x,y)=\overline{F}({[x3^{\nu-[\nu y]}\eta]\over 3^{\nu-[\nu y]}\eta},{[\nu y]\over \nu})$, $(**)$.\\

We claim that $\overline{F}$ is a reverse martingale. By the definition $(*)$, $\overline{F}$ is adapted to the reverse filtration, $(i)$. To verify $(ii)$ of the definition of a reverse martingale in Definition \ref{reverse}, by the tower law for conditional expectation, it is sufficient to prove that $E_{\eta}(\overline{F}_{i\over\nu}|\mathcal{E}_{i+1})=\overline{F}_{i+1\over\nu}$, for $0\leq i\leq \nu-1$. We have that;\\

$E_{\eta}(\overline{F}_{i\over\nu}|\mathcal{E}_{i+1})({j\over 3^{\nu-i-1}\eta})$\\

$=3^{\nu-i-1}\eta\int_{[{j\over 3^{\nu-i-1}\eta},{j+1\over 3^{\nu-i-1}\eta})}E_{\eta}(\overline{F}_{i\over\nu}|\mathcal{E}_{i+1})d\gamma_{\eta}$\\

$=3^{\nu-i-1}\eta\int_{[{j\over 3^{\nu-i-1}\eta},{j+1\over 3^{\nu-i-1}\eta})}\overline{F}_{i\over\nu}d\gamma_{\eta}$\\

$={3^{\nu-i-1}\eta\over 3^{\nu-i}\eta}(\sum_{k=0}^{2}\overline{F}_{i\over\nu}({3j+k\over 3^{\nu-i}\eta}))$\\

$={1\over 3}(F_{i\over\nu}({i_{x}\over\eta})+F_{i\over\nu}({i_{x}-1\over\eta})+F_{i\over\nu}({i_{x}+1\over\eta}))$\\

$=F_{i+1\over\nu}({i_{x}\over\eta})=\overline{F}_{i+1\over\nu}({j\over 3^{\nu-i-1}\eta})$\\

where $({i_{x}\over\eta},{i+1\over\nu})$ is associated to the special point $({j\over 3^{\nu-i-1}\eta},{i+1\over\nu})=(x,{i+1\over\nu})$\\

We now claim that if $({i\over\eta},{\nu-j\over\nu})$ is an original point, then it is associated to $3^{j}$ special points of the form $(y,{\nu-j\over\nu})$. We prove this by induction.\\

Base Case; $j=0$, then the original points $\{({i\over\eta},1):0\leq i\leq \eta-1\}$ are in bijection with the same special points.\\

Inductive hypothesis; Suppose that, for $0\leq i\leq \eta-1$, each original point $({i\over\eta},{\nu-j\over\nu})$ is associated to $3^{j}$ special points of the form $(y,{\nu-j\over\nu})$.\\

If the original point $({i\over\eta},{\nu-j-1\over\nu})$ is associated to a special point $(x,{\nu-j-1\over\nu})$, there are $3$ cases, either $({i\over\eta},{\nu-j\over\nu})$ is associated to $(x,{\nu-j\over\nu})$, or, $({i-1\over\eta},{\nu-j\over\nu})$ is associated to $(x-{1\over 3^{j+1}\eta},{\nu-j\over\nu})$, or $({i+1\over\eta},{\nu-j\over\nu})$ is associated to $(x-{2\over 3^{j+1}\eta},{\nu-j\over\nu})$. These cases are disjoint and all occur, so we obtain a total of $3.3^{j}=3^{j+1}$ assignments.\\

Using this result and the definition of $\overline{F}$, for a given $\alpha\in{^{*}\mathcal{R}}$;\\

 $3^{\nu-[\nu t]}{^{*}Card}(\{i:0\leq i\leq \eta-1,F({i\over\eta},{[\nu t]\over \nu})=\alpha\})$\\

 $={^{*}Card}(\{i:0\leq i\leq 3^{\nu-[\nu t]}\eta-1,\overline{F}({i\over 3^{\nu-[\nu t]}\eta},{[\nu t]\over \nu})=\alpha\})$\\

 We now calculate;\\

$\gamma_{\eta}(F_{t}=\alpha)$\\

$={1\over\eta}{^{*}Card}(\{i:0\leq i\leq \eta-1,F({i\over\eta},{[\nu t]\over \nu})=\alpha\})$\\

$={1\over 3^{\nu-[\nu t]}\eta}{^{*}Card}(\{i:0\leq i\leq 3^{\nu-[\nu t]}\eta-1,\overline{F}({i\over 3^{\nu-[\nu t]}\eta},{[\nu t]\over \nu})=\alpha\})$\\

$=\gamma_{\eta}(\overline{F}_{t}=\alpha)$\\

Using the measurability of $F_{[\nu t]\over \nu}$ and $\overline{F}_{[\nu t]\over \nu}$ with respect to the algebras $\mathcal{C}_{\eta}$ and $\mathcal{E}_{\eta,\nu-[\nu t]}$ respectively.

\end{proof}

\begin{rmk}
The advantage of working with a reverse martingale to analyse the cumulative density function of $F$ is that we have available a nonstandard martingale representation theorem, Ito's Lemma and a strategy to obtain a Fokker-Planck type equation. This is work in progress, see \cite{dep2}.

\end{rmk}

We make some considerations in connection with the heat equation.

\begin{defn}
\label{smooth}
We let $S^{1}(1)$ denote the circle of radius $1$, which we identify with the closed interval $[-\pi,\pi]$, via $\mu:[-\pi,\pi]\rightarrow S^{1}(1)$, $\mu(\theta)=e^{i\theta}$. We let $C([-\pi,\pi])=\{\mu^{*}(g):g\in C^{\infty}(S^{1})\}$ and $C^{\infty}([-\pi,\pi])=\{\mu^{*}(g):g\in C^{\infty}(S^{1})\}$. We let $T=[-\pi,\pi]\times\mathcal{R}_{\geq 0}$ and $T^{0}=(-\pi,\pi)\times\mathcal{R}_{>0}$ denote its interior. We let $C^{\infty}(T)=\{G\in C(T):G_{t}\in C^{\infty}([-\pi,\pi])$, for $t\in\mathcal{R}_{\geq 0}, G|T^{0}\in C^{\infty}(T^{0})\}$. If $h\in C([-\pi,pi])$, we define its Fourier transform by;\\

$\mathcal{F}(h)(m)={1\over 2\pi}\int_{-\pi}^{\pi}h(x)e^{-imx}dx$\\

If $g\in C(T)$, we define its Fourier transform in space by;\\

 $\mathcal{F}(g)(m,t)={1\over 2\pi}\int_{-\pi}^{\pi}g(x,t)e^{-imx}dx$
\end{defn}

\begin{lemma}
\label{heatequation}
If $g\in C^{\infty}([-\pi,\pi])$, there exists a unique $G\in C^{\infty}(T)$, with $G_{0}=g$, such that $G$ satisfies the heat equation;\\

${\partial{G}\over \partial t}={\partial^{2}G\over \partial x^{2}}$ $(*)$\\

on $T^{0}$.\\

\end{lemma}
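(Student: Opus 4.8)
\emph{Proof proposal.} The plan is to produce $G$ explicitly as a Fourier series in the space variable with a Gaussian damping factor in time, and to prove uniqueness by taking the Fourier transform in space of an arbitrary solution and reducing to a linear ODE in $t$ for each frequency.

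First I would set up existence. Since $g\in C^{\infty}([-\pi,\pi])$ is, by Definition \ref{smooth}, the pullback of a genuinely smooth function on the circle, repeated integration by parts in the defining integral for $\mathcal{F}(g)(m)$ (all boundary terms cancelling by periodicity) shows that for every $k\geq 0$ there is a constant $C_{k}$ with $|\mathcal{F}(g)(m)|\leq C_{k}(1+|m|)^{-k}$. Define
\[
G(x,t)=\sum_{m\in\mathbb{Z}}\mathcal{F}(g)(m)\,e^{-m^{2}t}\,e^{imx}.
\]
For $t\geq 0$ we have $0\leq e^{-m^{2}t}\leq 1$, so the super-polynomial decay of the coefficients makes this series, and every series obtained by differentiating it term by term in $x$ any number of times, absolutely and uniformly convergent on $T=[-\pi,\pi]\times\mathcal{R}_{\geq 0}$. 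On $T^{0}$, differentiating also in $t$ brings down factors $(-m^{2})^{j}$, but on any region $t\geq\delta>0$ the factor $e^{-m^{2}t}$ dominates these, so all mixed partials converge uniformly there as well. Hence $G$ is continuous on $T$, each slice $G_{t}$ lies in $C^{\infty}([-\pi,\pi])$, and $G|T^{0}\in C^{\infty}(T^{0})$, i.e. $G\in C^{\infty}(T)$. Term-by-term differentiation gives $\partial_{t}G=\sum_{m}(-m^{2})\mathcal{F}(g)(m)e^{-m^{2}t}e^{imx}=\partial_{x}^{2}G$ on $T^{0}$, and at $t=0$ the series is the Fourier series of $g$, so $G_{0}=g$.

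For uniqueness, suppose $G^{(1)},G^{(2)}\in C^{\infty}(T)$ both solve $(*)$ with $G^{(1)}_{0}=G^{(2)}_{0}=g$, and put $H=G^{(1)}-G^{(2)}$, so $H\in C^{\infty}(T)$ solves $(*)$ with $H_{0}=0$. For each fixed $m$ let $a_{m}(t)=\mathcal{F}(H)(m,t)$; since $H\in C^{\infty}(T)$ we may differentiate under the integral sign, and two integrations by parts in $x$ (periodicity again killing the boundary terms) give $a_{m}'(t)=\mathcal{F}(\partial_{x}^{2}H)(m,t)=-m^{2}a_{m}(t)$ with $a_{m}(0)=0$, hence $a_{m}\equiv 0$. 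Thus every $H_{t}\in C^{\infty}([-\pi,\pi])\subset L^{2}([-\pi,\pi])$ has vanishing Fourier coefficients, so $H_{t}=0$ for all $t\geq 0$ and $G^{(1)}=G^{(2)}$.

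The main obstacle I anticipate is not a single hard estimate but the bookkeeping needed to verify that the series above genuinely lands in the space $C^{\infty}(T)$ of Definition \ref{smooth}, which simultaneously demands continuity on all of $T$ (including up to $t=0$), smoothness of each time slice, and smoothness on the open set $T^{0}$; the whole point is that the combination of super-polynomial decay of $\mathcal{F}(g)(m)$ with the uniform bound $0\leq e^{-m^{2}t}\leq 1$ dispatches all three requirements at once, the $t$-derivatives being controlled only after separating off a neighbourhood of $t=0$. An alternative route to uniqueness, avoiding Fourier analysis entirely, is the parabolic maximum principle applied on the compact set $S^{1}\times[0,\tau]$; I would mention it but carry out the Fourier argument, since it reuses the transform already introduced and is self-contained.
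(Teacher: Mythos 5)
Your proposal is correct and follows essentially the same route as the paper: Fourier transform in the space variable, reduction to the linear ODE $\partial_{t}\mathcal{F}(G)(m,t)=-m^{2}\mathcal{F}(G)(m,t)$ for each frequency, the explicit solution $e^{-m^{2}t}\mathcal{F}(g)(m)$, and recovery of $G$ from the absolutely convergent Fourier series, with uniqueness coming from the fact that a smooth slice $G_{t}$ is determined by its Fourier coefficients. Your write-up is somewhat more careful than the paper's about coefficient decay and the verification that the series lies in $C^{\infty}(T)$, but the underlying argument is the same.
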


\begin{proof}
Suppose, first, there exists such a solution $G$, then, applying $\mathcal{F}$ to $(*)$, we must have that;\\

$\mathcal{F}({\partial{G}\over \partial t}-{\partial^{2}G\over \partial x^{2}})(m,t)=0$ $(t>0,m\in\mathcal{Z})$\\

Differentiating under the integral sign, we have that;\\

 $\mathcal{F}({\partial{G}\over \partial t})={\partial \mathcal{F}(G)\over \partial t}(m,t)$, for $t>0,m\in\mathcal{Z}$\\

 Integrating by parts and using the fact that $G_{t}\in C^{\infty}([-\pi,\pi])$, for $t>0$, we have that;\\

$\mathcal{F}{\partial^{2}G\over \partial x^{2}}=-m^{2}\mathcal{F}(G)(m,t)$, for $t>0,m\in\mathcal{Z}$\\

We thus obtain the sequence of ordinary differential equations, indexed by $m\in\mathcal{Z}$;\\

${\partial \mathcal{F}(G)\over \partial t}+m^{2}\mathcal{F}(G)(m,t)=0$ $(t>0)$\\

with initial condition, given by;\\

$\mathcal{F}(G)(m,0)=\mathcal{F}(g)(m)$\\

By Picard's Theorem, this has the unique solution, given by;\\

$\mathcal{F}(G)(m,t)=e^{-m^{2}t}\mathcal{F}(g)(m)$ $(t\geq 0)$\\

As $G_{t}\in C^{\infty}([-\pi,\pi])$, its Fourier series converges absolutely to $G_{t}$ and, in particular, $G_{t}$ is determined by its Fourier coefficients, for $t>0$. It follows that $G$ is a unique solution.\\

If $g\in C^{\infty}([-\pi,\pi])$, its Fourier series converges absolutely to $g$, hence, the series;\\

$\sum_{m\in\mathcal{Z}}e^{-m^{2}t}\mathcal{F}(g)(m)e^{imx}$\\

are absolutely convergent for $t>0$. It follows that $G$ defined by;\\

$G(x,t)=\sum_{m\in\mathcal{Z}}e^{-m^{2}t}\mathcal{F}(g)(m)e^{imx}$\\

is a solution of the required form.
\end{proof}

\begin{rmk}
Observe that as $t\rightarrow\infty$, $G(x,t)\rightarrow \mathcal{F}(g)(0)={1\over 2\pi}\int_{-\pi}^{\pi}gdx$, a phenomenon we observed in Lemma \ref{measurable}. This suggests that the two processes are connected, we make this observation more precise below.

\end{rmk}

\begin{defn}
If $\eta\in{{^{*}\mathcal{N}}\setminus\mathcal{N}}$, we let $\overline{\mathcal{V}_{\eta}}={^{*}\bigcup_{0\leq i\leq 2\eta-1}}[-\pi+\pi{i\over\eta},-\pi+\pi{i+1\over\eta})$, so that $\overline{\mathcal{V}_{\eta}}={^{*}[-\pi,\pi)}$. We let $\mathcal{D}_{\eta}$ denote the associated $*$-finite algebra, generated by the intervals $[-\pi+\pi{i\over\eta},-\pi+\pi{i+1\over\eta})$, for $0\leq i\leq 2\eta-1$, and $\mu_{\eta}$ the associated counting measure defined by $\mu_{\eta}([-\pi+\pi{i\over\eta},-\pi+\pi{i+1\over\eta}))={\pi\over\eta}$. We let $(\overline{\mathcal{V}_{\eta}},L(\mathcal{D}_{\eta}),L(\mu_{\eta}))$ denote the associated Loeb space, see.... If $\nu\in{{^{*}\mathcal{N}}\setminus\mathcal{N}}$, we let $\overline{\mathcal{T}_{\nu}}={^{*}\bigcup_{0\leq i\leq \nu^{2}-1}}[{i\over\nu},{i+1\over\nu})$, so that $\overline{\mathcal{T}_{\nu}}=[0,\nu)\subset{^{*}\mathcal{R}_{\geq 0}}$.We let $\mathcal{C}_{\eta}$ denote the associated $*$-finite algebra, generated by the intervals $[{i\over\nu},{i+1\over\nu})$, for $0\leq i\leq \nu^{2}-1$, and $\lambda_{\nu}$ the associated counting measure defined by $\lambda_{\nu}([{i\over\nu},{i+1\over\nu}))={1\over\nu}$. We let $(\overline{\mathcal{T}_{\nu}},L(\mathcal{C}_{\nu}),L(\lambda_{\nu}))$ denote the associated Loeb space.\\

We let $([-\pi,\pi],\mathfrak{D},\mu)$ denote the interval $[-\pi,\pi]$, with the completion $\mathfrak{D}$ of the Borel field, and $\mu$ the restriction of Lebesgue measure. We let $(\mathcal{R}_{\geq 0}\cup\{+\infty\},\mathfrak{C},\lambda)$ denote the extended real half line, with the completion $\mathfrak{C}$ of the extended Borel field, and $\lambda$ the extension of Lebesgue measure, with $\lambda(+\infty)=\infty$, see....\\

We let $(\overline{\mathcal{V}_{\eta}}\times \overline{\mathcal{T}_{\nu}},\mathcal{D}_{\eta}\times\mathcal{C}_{\nu},\mu_{\eta}\times\lambda_{\nu})$ be the associated product space and $(\overline{\mathcal{V}_{\eta}}\times \overline{\mathcal{T}_{\nu}},L(\mathcal{D}_{\eta}\times\mathcal{C}_{\eta}),  L(\mu_{\eta}\times \lambda_{\nu}))$ be the corresponding Loeb space. $(\overline{\mathcal{V}_{\eta}}\times \overline{\mathcal{T}_{\nu}},L(\mathcal{D}_{\eta})\times L(\mathcal{C}_{\nu}),L(\mu_{\eta})\times L(\lambda_{\nu}))$ is the complete product of the Loeb spaces $(\overline{\mathcal{V}_{\eta}},L(\mathcal{D}_{\eta}),L(\mu_{\eta}))$ and $(\overline{\mathcal{T}_{\nu}},L(\mathcal{C}_{\nu}),L(\lambda_{\nu}))$. Similarly, $([-\pi,\pi]\times(\mathcal{R}_{\geq 0}\cup\{+\infty\} ,\mathfrak{D}\times\mathfrak{C},\mu\times\lambda)$ is the complete product of $([-\pi,\pi],\mathfrak{D},\mu)$ and $(\mathcal{R}_{\geq 0}\cup\{+\infty\},\mathfrak{C},\lambda)$.\\

We let $({^{*}\mathcal{R}},{^{*}\mathfrak{E}})$ denote the hyperreals, with the transfer of the Borel field $\mathfrak{C}$ on $\mathcal{R}$. A function $f:(\overline{\mathcal{V}_{\eta}},\mathcal{D}_{\eta})\rightarrow ({^{*}\mathcal{R}},{^{*}\mathfrak{E}})$ is measurable, if $f^{-1}:{^{*}\mathfrak{E}}\rightarrow \mathcal{D}_{\eta}$. The same definition holds for ${\mathcal{T}_{\nu}}$.  Similarly, $f:(\overline{\mathcal{V}_{\eta}}\times\overline{\mathcal{T}_{\nu}},\mathcal{D}_{\eta}\times\mathcal{C}_{\nu})\rightarrow ({^{*}\mathcal{R}},{^{*}\mathfrak{E}})$ is measurable, if $f^{-1}:{^{*}\mathfrak{E}}\rightarrow \mathcal{D}_{\eta}\times\mathfrak{C}_{\nu}$. Observe that this is equivalent to the definition given in \cite{Loeb}. We will abbreviate this notation to $f:\overline{\mathcal{V}_{\eta}}\rightarrow{^{*}\mathcal{R}}$, $f:\overline{\mathcal{V}_{\eta}}\rightarrow{^{*}\mathcal{R}}$ or $f:\overline{\mathcal{V}_{\eta}}\times\overline{\mathcal{T}_{\nu}}\rightarrow{^{*}\mathcal{R}}$ is measurable, $(*)$. The same
applies to $({^{*}\mathcal{C}},{^{*}\mathfrak{E}})$, the hyper complex numbers, with the transfer of the Borel field $\mathfrak{E}$, generated by the complex topology. Observe that $f:\overline{\mathcal{V}_{\eta}}\rightarrow{^{*}\mathcal{C}}$, $f:\overline{\mathcal{T}_{\nu}}\rightarrow{^{*}\mathcal{C}}$  $f:\overline{\mathcal{V}_{\eta}}\times\overline{\mathcal{T}_{\nu}}\rightarrow{^{*}\mathcal{C}}$ is measurable, in this sense, iff $Re(f)$ and $Im(f)$ are measurable in the sense of $(*)$.\\

We let $\overline{\mathcal{S}_{\eta,\nu}}=\overline{\mathcal{V}_{\eta}}\times\overline{\mathcal{T}_{\nu}}$ and;\\

$V(\overline{\mathcal{V}_{\eta}})=\{f:\overline{\mathcal{V}_{\eta}}\rightarrow {^{*}}{\mathcal{C}},\ f\ measurable\ d(\mu_{\eta})\}$\\

and, similarly, we define $V(\overline{\mathcal{T}_{\nu}})$. Let;\\

$V(\overline{\mathcal{S}_{\eta,\nu}})=\{f:\overline{\mathcal{S}_{\eta,\nu}}\rightarrow {^{*}}{\mathcal{C}},\ f\ measurable\ d(\mu_{\eta}\times\lambda_{\nu})\}$\\

\end{defn}

\begin{lemma}
\label{squaremmp}

The identity;\\

$i:(\overline{\mathcal{V}_{\eta}}\times \overline{\mathcal{T}_{\nu}}, L(\mathcal{D}_{\eta}\times \mathcal{C}_{\nu}),L(\mu_{\eta}\times \lambda_{\nu}) )$\\

$\rightarrow (\overline{\mathcal{V}_{\eta}}\times \overline{\mathcal{T}_{\nu}}, L(\mathcal{D}_{\eta})\times L(\mathcal{C}_{\nu}), L(\mu_{\eta})\times L(\lambda_{\nu}))$\\

and the standard part mapping;\\

$st:(\overline{\mathcal{V}_{\eta}}\times \overline{\mathcal{T}_{\nu}}, L(\mathcal{D}_{\eta})\times L(\mathcal{C}_{\nu}), L(\mu_{\eta})\times L(\lambda_{\nu}))\rightarrow [-\pi,\pi]\times \mathcal{R}_{\geq 0}\cup\{+\infty\} $\\

are measurable and measure preserving.
\end{lemma}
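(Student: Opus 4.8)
\textbf{Proof plan for Lemma \ref{squaremmp}.}

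The plan is to handle the two maps separately, since they are of quite different character. For the identity map $i$, the statement that it is measurable is exactly the statement that $L(\mathcal{D}_{\eta})\times L(\mathcal{C}_{\nu})\subseteq L(\mathcal{D}_{\eta}\times\mathcal{C}_{\nu})$, i.e.\ that the (completed) product of the two Loeb $\sigma$-algebras is contained in the Loeb $\sigma$-algebra of the product $*$-finite algebra. This is a standard fact about Loeb product spaces: the internal algebra $\mathcal{D}_{\eta}\times\mathcal{C}_{\nu}$ already contains all measurable rectangles $A\times B$ with $A\in\mathcal{D}_{\eta}$, $B\in\mathcal{C}_{\nu}$, hence its Loeb $\sigma$-algebra contains $L(\mathcal{D}_{\eta})\otimes L(\mathcal{C}_{\nu})$ and, being complete, its completion as well. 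That $i$ is measure preserving then follows because on a measurable rectangle both $L(\mu_{\eta}\times\lambda_{\nu})$ and $L(\mu_{\eta})\times L(\lambda_{\nu})$ agree: for internal $A,B$ one has $(\mu_{\eta}\times\lambda_{\nu})(A\times B)=\mu_{\eta}(A)\lambda_{\nu}(B)$ by definition of the $*$-finite product counting measure, so taking standard parts gives $L(\mu_{\eta}\times\lambda_{\nu})(A\times B)=L(\mu_{\eta})(A)\cdot L(\lambda_{\nu})(B)$, and both measures are then the common extension by Carath\'eodory/uniqueness of product measure on the $\pi$-system of rectangles. I would cite Keisler's and Loeb's results on Loeb product spaces for this.

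For the standard part map $st$, I would first verify measurability. It suffices to check that preimages of a generating family of measurable sets are Loeb measurable; take products of intervals $[a,b]\times[c,d]$ in $[-\pi,\pi]\times(\mathcal{R}_{\geq0}\cup\{+\infty\})$. The preimage $st^{-1}([a,b]\times[c,d])$ equals $st_{1}^{-1}([a,b])\times st_{2}^{-1}([c,d])$ where $st_{1}:\overline{\mathcal{V}_{\eta}}\to[-\pi,\pi]$ and $st_{2}:\overline{\mathcal{T}_{\nu}}\to\mathcal{R}_{\geq0}\cup\{+\infty\}$ are the coordinate standard part maps, so it is enough to treat each coordinate. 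For $st_{1}$, $st_{1}^{-1}([a,b])=\bigcap_{k}{^{*}\bigl(}[a-\tfrac1k,b+\tfrac1k]\cap[-\pi,\pi)\bigr)$ is a countable intersection of internal sets in $\mathcal{D}_{\eta}$, hence lies in $L(\mathcal{D}_{\eta})$; likewise for $st_{2}$, using that $st_{2}^{-1}(\{+\infty\})=\bigcap_{k}{^{*}[k,\nu)}$ is a countable intersection of internal sets. Thus $st$ is measurable into the product Loeb $\sigma$-algebra $L(\mathcal{D}_{\eta})\times L(\mathcal{C}_{\nu})$.

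For the measure-preserving property of $st$, I would again reduce to a $\pi$-system. On the first coordinate, $st_{1}$ pushes $L(\mu_{\eta})$ forward to Lebesgue measure on $[-\pi,\pi]$: for an interval $[a,b]$ one has $L(\mu_{\eta})\bigl(st_{1}^{-1}[a,b]\bigr)=\inf_{k}{}^{\circ}\!\bigl(\mu_{\eta}({^{*}[a-\frac1k,b+\frac1k]}\cap{^{*}[-\pi,\pi)})\bigr)=b-a$, since $\mu_{\eta}$ of such an internal interval of length $\ell$ is infinitely close to $\ell$ (it counts $\approx\ell\eta/\pi$ grid points each of mass $\pi/\eta$). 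On the second coordinate, the same computation gives $L(\lambda_{\nu})(st_{2}^{-1}[c,d])=d-c$ for finite $c\le d$, and $L(\lambda_{\nu})(st_{2}^{-1}\{+\infty\})=\lambda_{\nu}({^{*}[k,\nu)})=\nu-k$ for every finite $k$, which is infinite, matching $\lambda(\{+\infty\})=\infty$; and $L(\lambda_{\nu})(st_{2}^{-1}[c,\infty))=\infty$ as well, consistent with $\lambda$. Since the pushforward of $L(\mu_{\eta})\times L(\lambda_{\nu})$ agrees with $\mu\times\lambda$ on the $\pi$-system of rectangles $[a,b]\times[c,d]$ and $[a,b]\times\{+\infty\}$, which generates $\mathfrak{D}\times\mathfrak{C}$ and is $\sigma$-finite-exhausting on $[-\pi,\pi]\times\mathcal{R}_{\geq0}$, a standard uniqueness argument (e.g.\ the $\pi$-$\lambda$ theorem applied on each set $[-\pi,\pi]\times[0,n]$ and then on $[-\pi,\pi]\times\{+\infty\}$) forces the two measures to coincide on all of $\mathfrak{D}\times\mathfrak{C}$. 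The main obstacle is purely bookkeeping at $+\infty$: one must be careful that $\lambda$ is the \emph{extended} Lebesgue measure with an atom-free-but-infinite mass at $+\infty$, so the uniqueness argument has to be run separately on the finite part and on the fibre over $+\infty$ rather than invoking a single $\sigma$-finite uniqueness statement; once that case split is made explicit the rest is routine.
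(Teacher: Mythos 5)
Your proposal is correct and is essentially the argument the paper intends: the paper's proof is a one-line citation of Lemma 0.2 of \cite{dep3}, Carath\'eodory's Extension Theorem and Theorem 22 of \cite{and}, which amounts to exactly your two steps — agreement of the measures on the $\pi$-system of rectangles of internal sets plus Carath\'eodory/uniqueness, and writing standard-part preimages of intervals as countable intersections of internal sets to get measurability and measure preservation of $st$. Your explicit case split at $+\infty$ (running the uniqueness argument on the finite part and on the fibre over $+\infty$ separately, since $\lambda$ is not $\sigma$-finite there) is a detail the paper leaves to its references, and is handled appropriately in your plan.
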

\begin{proof}
The proof is similar to Lemma 0.2 in \cite{dep3}, using Caratheodory's Extension Theorem and Theorem 22 of \cite{and}.\\
\end{proof}

\begin{defn}{Discrete Partial Derivatives}\\
\label{partials}

Let $f:\overline{\mathcal{V}_{\eta}}\rightarrow{^{*}\mathcal{C}}$ be measurable. As in \cite{dep1}, we define the discrete derivative $f'$ to be the unique measurable function satisfying;\\

$f'(-\pi+\pi{i\over\eta})={\eta\over2\pi}(f(-\pi+\pi{i+1\over\eta})-f(-\pi+\pi{i-1\over\eta}))$;\\

for $i\in{^{*}\mathcal{N}}_{1\leq i\leq 2\eta-2}$.\\

$f'(\pi-{\pi\over\eta})={\eta\over 2\pi}(f(-\pi)-f(\pi-\pi{2\over\eta}))$\\

$f'(-\pi)={\eta\over 2\pi}(f(-\pi+{\pi\over\eta})-f(\pi-{\pi\over\eta}))$\\

Let $f:\overline{\mathcal{T}_{\nu}}\rightarrow{^{*}\mathcal{C}}$ be measurable. As in \cite{dep1}, we define the discrete derivative $f'$ to be the unique measurable function satisfying;\\

$f'({i\over\nu})=\nu(f({i+1\over\nu})-f({i\over\nu}))$;\\

for $i\in{^{*}\mathcal{N}}_{0\leq i\leq \nu^{2}-2}$.\\

$f'({\nu-1\over\nu})=0$;\\

If $f:\overline{\mathcal{V}_{\eta}}\rightarrow{^{*}\mathcal{C}}$ is measurable, then we define the shift (left, right);\\

$f^{lsh}(-\pi+\pi{j\over \eta})=f(-\pi+\pi{j+1\over\eta})$ for $0\leq j\leq 2\eta-2$\\

$f^{lsh}(\eta-{\pi\over \eta})=f(-\pi)$\\

$f^{rsh}(-\pi+\pi{j\over\eta})=f(-\pi+\pi{j-1\over\eta})$ for $1\leq j\leq 2\eta-1$\\

$f^{rsh}(-\pi)=f(\pi-{\pi\over\eta})$\\

If $f:\overline{\mathcal{T}_{\nu}}\rightarrow{^{*}\mathcal{C}}$ is measurable, then we define the shift (left, right);\\

$f^{lsh}({j\over\nu})=f({j+1\over\nu})$ for $0\leq j\leq \nu^{2}-2$\\

$f^{lsh}(\nu-{1\over\nu})=f(0)$\\

$f^{rsh}({j\over\nu})=f({j-1\over\nu})$ for $1\leq j\leq \nu^{2}-1$\\

$f^{rsh}(0)=f(\nu-{1\over\nu})$\\

If $f:\overline{\mathcal{V}_{\eta}}\times\overline{\mathcal{T}_{\nu}}\rightarrow{^{*}\mathcal{C}}$ is measurable. Then we define $\{{\partial f\over\partial x},{\partial f\over\partial t}\}$ to be the unique measurable functions satisfying;\\

${\partial f\over\partial x}(-\pi+\pi{i\over\eta},t)={\eta\over 2\pi}(f(-\pi+\pi{i+1\over\eta},t)-f(-\pi+\pi{i-1\over\eta},y))$;\\

for $i\in{^{*}\mathcal{N}}_{1\leq i\leq 2\eta-2}, t\in\overline{\mathcal{T}_{\nu}}$\\

${\partial f\over\partial x}(\pi-{\pi\over\eta},t)={\eta\over 2\pi}(f(-\pi,t)-f(\pi-\pi{2\over\eta},t))$\\

${\partial f\over\partial x}(-\pi,t)={\eta\over 2\pi}(f(-\pi+{\pi\over\eta},t)-f(\pi-{\pi\over\eta},t))$\\

${\partial f\over\partial t}(x,{j\over\nu})=(f(x,{j+1\over\nu})-f(x,{j\over\nu}))$;\\

for $j\in{^{*}\mathcal{N}}_{0\leq j\leq \nu^{2}-2}, x\in\overline{\mathcal{H}_{\eta}}$\\

${\partial f\over\partial t}(x,\nu-{1\over\nu})=0$\\

We define $\{f^{lsh_{x}},f^{lsh_{t}},f^{rsh_{x}},f^{rsh_{t}}\}$ by;\\

$f^{lsh_{x}}(x_{0},t_{0})=(f_{t})^{lsh}(x_{0})$\\

$f^{lsh_{t}}(x_{0},t_{0})=(f_{x})^{lsh}(t_{0})$\\

$f^{rsh_{x}}(x_{0},t_{0})=(f_{t})^{rsh}(x_{0})$\\

$f^{rsh_{t}}(x_{0},t_{0})=(f_{x})^{rsh}(t_{0})$\\

where, if $(x_{0},t_{0})\in\overline{\mathcal{V}_{\eta}}\times\overline{\mathcal{T}_{\nu}}$;\\

$f_{t_{0}}(x_{0})=f_{x_{0}}(t_{0})=f(\pi{[{\eta x_{0}\over \pi}]\over\eta},{\nu t_{0}\over \nu})$\\

\end{defn}

\begin{rmk}
\label{rmkpartials}
If $f$ is measurable, then so are;\\

 $\{{\partial f\over\partial x},{\partial f\over\partial t},{\partial^{2} f\over\partial x^{2}},f_{x},f_{t},f^{lsh_{x}},f^{lsh_{t}},f^{rsh_{x}},f^{rsh_{t}},f^{sh_{x}^{2}},f^{sh_{t}^{2}},f^{rsh_{x}^{2}},f^{rsh_{t}^{2}}\}$\\

 This follows immediately, by transfer, from the corresponding result for the discrete derivatives and shifts of discrete functions $f:{\mathcal{H}_{n}}\times{\mathcal{T}_{m}}\rightarrow\mathcal{C}$, where $n,m\in{\mathcal{N}}$, see Definition 0.15 and Definition 0.18 of \cite{dep4}.
\end{rmk}

\begin{lemma}
\label{discretederivativeshift}

Let $g,h:\overline{\mathcal{V}_{\eta}}\rightarrow{^{*}\mathcal{C}}$ be measurable. Then;\\

$(i)$. $\int_{\overline{\mathcal{V}_{\eta}}}g'(y)d\mu_{\eta}(y)=0$\\

$(ii)$. $(gh)'=g'h^{lsh}+g^{rsh}h'$\\

$(iii)$. $\int_{\overline{\mathcal{V}_{\eta}}}(g'h)(y)d\mu_{\eta}(y)=-\int_{\overline{\mathcal{V}_{\eta}}}gh'd\mu_{\eta}(y)$\\

$(iv)$. $\int_{\overline{\mathcal{V}_{\eta}}}g(y)d\mu_{\eta}(y)=\int_{\overline{\mathcal{V}_{\eta}}}g^{lsh}(y)d\mu_{\eta}(y)=\int_{\overline{\mathcal{V}_{\eta}}}g^{rsh}(y)d\mu_{\eta}(y)$\\

$(v)$. $(g')^{rsh}=(g^{rsh})'$, $(g')^{lsh}=(g^{lsh})'$ \\

$(vi)$. $\int_{\overline{\mathcal{V}_{\eta}}}(g''h)(y)d\mu_{\eta}(y)=\int_{\overline{\mathcal{V}_{\eta}}}(gh'')(y)d\mu_{\eta}(y)$\\

 (\footnote{Similar results hold for $\{sh_{x},{\partial \over \partial x},{\partial \over \partial t}\}$. Namely, if $g,h:\overline{\mathcal{S}_{\eta,\nu}}\rightarrow{^{*}\mathcal{C}}$ are measurable. Then;\\

$(i)$. $\int_{\overline{\mathcal{S}_{\eta,\nu}}}{\partial g\over \partial x}d(\mu_{\eta}\times\lambda_{\nu})=0$\\

$(ii)$. ${\partial gh\over \partial x}={\partial g\over \partial x}h^{lsh_{x}}+g^{rsh_{x}}{\partial h\over \partial x}$\\

$(iii)$. $\int_{\overline{\mathcal{S}_{\eta,\nu}}}{\partial g\over \partial x}hd(\mu_{\eta}\times\lambda_{\nu})=-\int_{\overline{\mathcal{S}_{\eta,\nu}}}g{\partial h\over \partial x}d(\mu_{\eta}\times\lambda_{\nu})$\\

$(iv)$. $\int_{\overline{\mathcal{S}_{\eta,\nu}}}gd(\mu_{\eta}\times\lambda_{\nu})=\int_{\overline{\mathcal{S}_{\eta,\nu}}}g^{lsh_{x}}d(\mu_{\eta}\times\lambda_{\nu})=\int_{\overline{\mathcal{S}_{\eta,\nu}}}g^{rsh_{x}}d(\mu_{\eta}\times\lambda_{\nu})$\\

$(v)$. $({\partial g\over \partial x})^{lsh_{x}}={\partial (g^{lsh_{x}})\over \partial x}$, and, similarly, with $rsh_{x}$ replacing $lsh_{x}$. \\

$(vi)$. $\int_{\overline{\mathcal{S}_{\eta,\nu}}}({\partial^{2} g\over \partial x^{2}}h)d(\mu_{\eta}\times\lambda_{\nu})=\int_{\overline{\mathcal{S}_{\eta,\nu}}}(g{\partial^{2} h\over \partial x^{2}})d(\mu_{\eta}\times\lambda_{\nu})$ $(*)$\\

For $(i)$, using $(i)$ from the argument in the main proof, we have;\\

$\int_{\overline{\mathcal{S}_{\eta,\nu}}}{\partial g\over \partial x}d(\mu_{\eta}\times\lambda_{\nu})$\\

$=\int_{\overline{\mathcal{V}_{\eta}}}(\int_{\overline{\mathcal{T}_{\nu}}}({\partial g\over \partial x})_{t}d\mu_{\eta})d\lambda_{\nu}(t)$\\

$=\int_{\overline{\mathcal{V}_{\eta}}}(\int_{\overline{\mathcal{T}_{\nu}}}({\partial g_{t}\over \partial x})d\mu_{\eta})d\lambda_{\nu}(t)$\\

$=\int_{\overline{\mathcal{T}_{\nu}}}0d\lambda_{\nu}(t)=0$\\

The proofs of $(ii),(iii),(iv)$ are similar to the main proof, relying on the result of $(i)$. $(v)$ follows easily from Definitions \ref{partials} and $(vi)$ follows, repeating the result of $(iii)$, and applying $(v)$.\\

})

\end{lemma}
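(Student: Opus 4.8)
The plan is to prove Lemma \ref{discretederivativeshift} by working entirely with the explicit finite-difference formulas from Definition \ref{partials} and transferring a cyclic-sum identity. First I would observe that $\overline{\mathcal{V}_{\eta}}$, with the counting measure $\mu_{\eta}$, is nothing but a $*$-finite cyclic group of $2\eta$ points each of mass $\pi/\eta$, so that $\int_{\overline{\mathcal{V}_{\eta}}}g\,d\mu_{\eta}=\frac{\pi}{\eta}\,{^{*}\!\sum_{i=0}^{2\eta-1}}g(-\pi+\pi i/\eta)$, and both the shift operators and $g'$ are given by finite differences on this cycle. The whole lemma is then the transfer of a purely combinatorial statement about functions on $\mathbb{Z}/2n\mathbb{Z}$, so by transfer it suffices to check each identity for a finite cyclic grid of $2n$ points and then invoke \cite{dep4} (as flagged in Remark \ref{rmkpartials}); but I would also give the direct $*$-finite computations since they are short.

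For (i), I would expand ${^{*}\!\sum_i} g'(-\pi+\pi i/\eta)=\frac{\eta}{2\pi}\,{^{*}\!\sum_i}\bigl(g(-\pi+\pi(i+1)/\eta)-g(-\pi+\pi(i-1)/\eta)\bigr)$ and note that the sum telescopes around the cycle: each value $g(-\pi+\pi j/\eta)$ appears once with a $+$ and once with a $-$ sign (this is exactly why the boundary cases $f'(-\pi)$ and $f'(\pi-\pi/\eta)$ were defined cyclically), so the sum is $0$. For (iv), the same cyclic-reindexing argument shows ${^{*}\!\sum_i} g^{lsh}=\,{^{*}\!\sum_i} g=\,{^{*}\!\sum_i} g^{rsh}$. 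For (ii), I would verify the discrete Leibniz rule $(gh)'=g'h^{lsh}+g^{rsh}h'$ pointwise: at a point with neighbours $a^{-},a,a^{+}$ (values of the argument), the left side is $\frac{\eta}{2\pi}(g(a^{+})h(a^{+})-g(a^{-})h(a^{-}))$, and one checks the right side equals $\frac{\eta}{2\pi}\bigl(g(a^{+})-g(a^{-})\bigr)h(a^{+})+g(a^{-})\bigl(h(a^{+})-h(a^{-})\bigr)$, wait — more precisely $g'(a)h^{lsh}(a)+g^{rsh}(a)h'(a)=\frac{\eta}{2\pi}(g(a^{+})-g(a^{-}))h(a^{+})+g(a^{-})\cdot\frac{\eta}{2\pi}(h(a^{+})-h(a^{-}))=\frac{\eta}{2\pi}(g(a^{+})h(a^{+})-g(a^{-})h(a^{-}))$, which matches; the boundary points work the same way by the cyclic conventions. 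Then (iii) follows from (i) and (ii): $0=\int (gh)'=\int g'h^{lsh}+\int g^{rsh}h'$, and applying (iv) to rewrite $\int g'h^{lsh}=\int (g'h^{lsh})$ — one shifts both factors right to get $\int (g')^{rsh}(h^{lsh})^{rsh}=\int (g^{rsh})'h$ using (v), so $\int g'h=-\int g h'$ after one more application of (iv) and (v) to line the shifts up. Statement (v), $(g')^{rsh}=(g^{rsh})'$ and $(g')^{lsh}=(g^{lsh})'$, is immediate since both sides are the same finite difference evaluated at the shifted index; and (vi) follows by applying (iii) twice: $\int g''h=-\int g'h'=\int g h''$, modulo matching up the shift operators via (iv) and (v).

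The main obstacle, such as it is, will be bookkeeping the shift operators in (iii) and (vi): because the discrete product rule produces $h^{lsh}$ and $g^{rsh}$ rather than $h$ and $g$, one cannot simply integrate by parts as in the smooth case, and one must repeatedly use (iv) (shift-invariance of the integral) together with (v) (shifts commute with $'$) to bring the expressions into the stated symmetric form; the cyclic boundary conventions ${-1\over\eta}={\eta-1\over\eta}$ etc. are what make all of this consistent, and I would remark explicitly that without them the telescoping in (i) and the reindexing in (iv) would fail at the endpoints. Finally, the footnote version for $\overline{\mathcal{S}_{\eta,\nu}}$ and the operators $\{sh_{x},\partial/\partial x,\partial/\partial t\}$ reduces to the one-variable case by Fubini for $*$-finite measures (integrating in $t$ first and applying the one-variable identity fibrewise, exactly as sketched in the footnote), so no new ideas are needed there.
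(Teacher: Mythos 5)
Your proposal is correct and follows essentially the same route as the paper: cyclic telescoping for $(i)$, cyclic reindexing for $(iv)$, the pointwise add-and-subtract verification of the discrete Leibniz rule for $(ii)$, and then the same shift bookkeeping via $(iv)$ and $(v)$ (writing $h=(h^{rsh})^{lsh}$, equivalently shifting the integrand) to pass from $\int g'h^{lsh}=-\int g^{rsh}h'$ to $(iii)$ and hence $(vi)$, with the footnote version handled fibrewise exactly as in the paper. No gaps to report.
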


\begin{proof}
 In the first part, for $(i)$, we have, using Definition \ref{partials}, that;\\

$\int_{\overline{\mathcal{V}_{\eta}}}g'(y)d\mu_{\eta}(y)$\\

$={\pi\over\eta}[{^{*}}\sum_{1\leq j\leq 2\eta-2}{\eta\over 2\pi}[g(-\pi+\pi({j+1\over\eta}))-g(-\pi+\pi({j-1\over\eta}))]$\\

$+{\eta\over 2\pi}[g(-\pi+{\pi\over\eta})-g(\pi-{\pi\over\eta})]+{\eta\over 2\pi}[g(-\pi)-g(\pi-2{\pi\over\eta})]]=0$\\

For $(ii)$, we calculate;\\

$(gh)'(-\pi+\pi{j\over\eta})=$\\

$={\eta\over 2\pi}(gh(-\pi+\pi{j+1\over\eta})-gh(-\pi+\pi{j-1\over\eta}))$\\

$={\eta\over 2\pi}(gh(-\pi+\pi{j+1\over\eta})-g(-\pi+\pi{j-1\over\eta})h(-\pi+\pi{j+1\over\eta})$\\

$+g(-\pi+\pi{j-1\over\eta})h(-\pi+\pi{j+1\over\eta})-gh(-\pi+\pi{j-1\over\eta}))$\\

$=g'(-\pi+\pi{j\over\eta})h(-\pi+\pi{j+1\over\eta})+g(-\pi+\pi{j-1\over\eta})h'(-\pi+\pi{j\over\eta})$\\

$=(g'h^{lsh}+g^{rsh}h')(-\pi+\pi{j\over \eta})$\\

Combining $(i),(ii)$, we have;\\

$0=\int_{\overline{\mathcal{V}_{\eta}}}(gh)'(x)d\mu_{\eta}(x)$\\

$=\int_{\overline{\mathcal{V}_{\eta}}}(g'h^{lsh}+g^{rsh}h')(x)d\mu_{\eta}(x)$\\

and, rearranging, that;\\

$\int_{\overline{\mathcal{V}_{\eta}}}(g'h^{lsh})d\mu_{\eta}=-\int_{\overline{\mathcal{H}_{\eta}}}(g^{rsh}h')d\mu_{\eta}$\\

For $(iv)$, we have that;\\

$\int_{\overline{\mathcal{V}_{\eta}}}g^{rsh}(y)d\mu_{\eta}(y)$\\

$={\pi\over \eta}({^{*}\sum}_{0\leq j\leq 2\eta-1}g^{rsh}(-\pi+\pi{j\over\eta}))$\\

$={\pi\over\eta}({^{*}\sum}_{1\leq j\leq 2\eta-2}g(-\pi+\pi{j-1\over\eta})+g(\pi-{\pi\over\eta}))$\\

$={\pi\over\eta}({^{*}\sum}_{0\leq j\leq 2\eta-1}g(-\pi+\pi{j\over\eta})$\\

$=\int_{\overline{\mathcal{V}_{\eta}}}g(y)d\mu_{\eta}(y)$\\

A similar calculation holds with $g^{lsh}$. For $(v)$, we have for $2\leq j\leq 2\eta-2$;\\

$(g')^{rsh}(-\pi+\pi{j\over\eta})$\\

$=g'(-\pi+\pi{j-1\over \eta})$\\

$={\eta\over 2\pi}(g(-\pi+\pi{j\over \eta})-g(-\pi+\pi{j-2\over \eta}))$\\

$(g^{rsh})'(-\pi+\pi{j\over\eta})$\\

$={\eta\over 2\pi}(g^{rsh}(-\pi+\pi{j+1\over \eta})-g^{rsh}(-\pi+\pi{j-1\over \eta}))$\\

$={\eta\over 2\pi}(g(-\pi+\pi{j\over \eta})-g(-\pi+\pi{j-2\over \eta}))$\\

Similar calculations hold for the remaining $j$ to give that $(g')^{rsh}=(g^{rsh})'$, and the calculation $(g')^{lsh}=(g^{lsh})'$ is also similar.\\

It follows that;\\

$\int_{\overline{\mathcal{V}_{\eta}}}(g'h)d\mu_{\eta}$\\

$=\int_{\overline{\mathcal{V}_{\eta}}}(g'(h^{rsh})^{lsh})d\mu_{\eta}$\\

$=-\int_{\overline{\mathcal{V}_{\eta}}}(g^{rsh}(h^{rsh})')d\mu_{\eta}$\\

$=-\int_{\overline{\mathcal{V}_{\eta}}}(g^{rsh}(h')^{rsh})d\mu_{\eta}$\\

$=-\int_{\overline{\mathcal{V}_{\eta}}}(gh'))d\mu_{\eta}$\\

which gives $(vi)$, using $(iv),(v)$.\\

\end{proof}

\begin{defn}
\label{restriction}
If $\eta$ is even, we define a restriction $\overline{()}:\overline{\mathcal{V}_{\eta}}\rightarrow \overline{\mathcal{V}_{{\eta\over 2}}}$. Namely;\\

$\overline{f}(-\pi+\pi{2i\over\eta})=f(-\pi+\pi{2i\over\eta})$;\\

for $i\in{^{*}\mathcal{N}}_{0\leq i\leq \eta-1}$.\\
\end{defn}

\begin{lemma}
\label{restrictionderivative}
Let notation be as in Definitions \ref{restriction} and \ref{partials}, then;\\

$\overline{f'}(-\pi+\pi{2 i\over\eta})={\eta\over2\pi}(f(-\pi+\pi{2 i+1\over\eta})-f(-\pi+\pi{2 i-1\over\eta}))$;\\

for $i\in{^{*}\mathcal{N}}_{1\leq i\leq \eta-1}$.\\

$\overline{f'}(-\pi)={\eta\over 2\pi}(f(-\pi+{\pi\over\eta})-f(\pi-{\pi\over\eta}))$\\

and;\\

$\overline{f^{lsh}}(-\pi+\pi{2 j\over \eta})=f(-\pi+\pi{2j+1\over\eta})$ for $0\leq j\leq \eta-1$\\

$\overline{f^{rsh}}(-\pi+\pi{2 j\over\eta})=f(-\pi+\pi{2 j-1\over\eta})$ for $1\leq j\leq \eta-1$\\

$\overline{f^{rsh}}(-\pi)=f(\pi-{\pi\over\eta})$\\

\end{lemma}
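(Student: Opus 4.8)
The plan is to unwind both sides of each asserted identity directly from the defining formulas in Definitions \ref{restriction} and \ref{partials}, treating the three families of claims — the values of $\overline{f'}$, the value $\overline{f'}(-\pi)$, and the two shift identities — in turn. Since the restriction $\overline{(\cdot)}$ simply samples a function at the even lattice points $-\pi+\pi\tfrac{2i}{\eta}$, in each case I would start from the right-hand object (namely $f'$, $f^{lsh}$ or $f^{rsh}$ as defined on $\overline{\mathcal{V}_{\eta}}$), substitute $\tfrac{2i}{\eta}$ (respectively $\tfrac{2j}{\eta}$) for the index $\tfrac{i}{\eta}$ in the relevant clause of Definition \ref{partials}, and then rewrite the resulting expression in terms of values of $f$ at the neighbouring (now odd) lattice points.

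Concretely: for the generic case $1\leq i\leq \eta-1$, the point $-\pi+\pi\tfrac{2i}{\eta}$ is an interior point of $\overline{\mathcal{V}_{\eta}}$ with index $2i$ satisfying $1\leq 2i\leq 2\eta-2$, so the first clause of the definition of $f'$ in Definition \ref{partials} applies verbatim and gives $\tfrac{\eta}{2\pi}\bigl(f(-\pi+\pi\tfrac{2i+1}{\eta})-f(-\pi+\pi\tfrac{2i-1}{\eta})\bigr)$, which is exactly the claimed value of $\overline{f'}$. For the endpoint $-\pi$, the index is $0$, so the boundary clause $f'(-\pi)=\tfrac{\eta}{2\pi}\bigl(f(-\pi+\tfrac{\pi}{\eta})-f(\pi-\tfrac{\pi}{\eta})\bigr)$ applies, matching the stated formula. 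The two shift identities are the same kind of substitution into the $f^{lsh}$ and $f^{rsh}$ clauses of Definition \ref{partials}: for $f^{lsh}$ at index $2j$ with $0\leq 2j\leq 2\eta-2$ one gets $f(-\pi+\pi\tfrac{2j+1}{\eta})$, and for $f^{rsh}$ at index $2j$ with $1\leq 2j\leq 2\eta-1$ one gets $f(-\pi+\pi\tfrac{2j-1}{\eta})$, while at $-\pi$ the boundary clause gives $f^{rsh}(-\pi)=f(\pi-\tfrac{\pi}{\eta})$.

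There is no real obstacle here — the statement is essentially a bookkeeping lemma recording that the discrete derivative and shift operators of Definition \ref{partials} "commute past" the even-point restriction of Definition \ref{restriction} in the obvious way, which is needed later for comparing the $\eta$-chain with the $\tfrac{\eta}{2}$-chain. The only point requiring a modicum of care is the index bookkeeping at the wrap-around endpoints: one must check that when $i$ or $j$ hits its extreme value the odd indices $2i\pm1$ (resp. $2j\pm1$) still fall in the admissible range $\{-1,0,1,\dots,2\eta-1\}$ and are interpreted via the stated cyclic conventions ($-1 \leftrightarrow 2\eta-1$ under the identification of $\overline{\mathcal{V}_{\eta}}$ with $^{*}[-\pi,\pi)$), and that the case split in Definition \ref{partials} between the generic clause and the two boundary clauses is respected. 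Once those edge cases are verified the identities are immediate, and the whole argument is internal, so no transfer or Loeb-measure machinery is invoked.
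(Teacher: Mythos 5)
Your proposal is correct and is essentially the paper's own argument: the paper dispatches this lemma as an immediate consequence of Definitions \ref{restriction} and \ref{partials}, which is exactly the definition-unwinding and index bookkeeping (including the check that the even indices $2i$, $2j$ stay in the generic clauses and that only the $-\pi$ endpoint needs the boundary clauses) that you spell out.
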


\begin{proof}
The proof is an immediate consequence of Definitions \ref{restriction} and \ref{partials}

\end{proof}

\begin{rmk}
It is important to note that, in general $\overline{f'}\neq \overline{f}'$ and, similarly, for $lsh, rsh$.

\end{rmk}

\begin{lemma}
Let $\{g,h\}\subset V(\overline{\mathcal{V}_{\eta}})$ be measurable, then;\\

$(i)$. $\int_{\overline{\mathcal{V}_{{\eta\over 2}}}}\overline{g'}(y)d\mu_{{\eta\over 2}}(y)=0$\\

$(ii)$. $\overline{(gh)'}=\overline{g'}\overline{h^{lsh}}+\overline{g^{rsh}}\overline{h'}$\\

$(iii)$. $\int_{\overline{\mathcal{V}_{{\eta\over 2}}}}\overline{(g'h)}(y)d\mu_{{\eta\over 2}}(y)=-\int_{\overline{\mathcal{V}_{{\eta\over 2}}}}\overline{g^{rsh}(h')^{rsh}}d\mu_{\eta}(y)$\\

\end{lemma}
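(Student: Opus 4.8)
The plan is to transfer the three identities of Lemma~\ref{discretederivativeshift}$(i)$--$(iii)$ through the restriction map $\overline{()}$ of Definition~\ref{restriction}, using the explicit formulas recorded in Lemma~\ref{restrictionderivative}. The key observation is that $\overline{()}$ simply samples a function on $\overline{\mathcal{V}_{\eta}}$ at the even grid points $-\pi+\pi\tfrac{2i}{\eta}$, which are exactly the grid points of $\overline{\mathcal{V}_{\eta/2}}$ (at scale $\tfrac{\eta}{2}$), and that $\mu_{\eta/2}$ assigns mass $\tfrac{2\pi}{\eta}$ to each such point, i.e.\ twice the mass $\mu_{\eta}$ assigns. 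Hence for any measurable $G:\overline{\mathcal{V}_{\eta}}\to{^{*}\mathcal{C}}$ one has $\int_{\overline{\mathcal{V}_{\eta/2}}}\overline{G}\,d\mu_{\eta/2}=\tfrac{2\pi}{\eta}\,{^{*}\!\sum_{0\le i\le \eta-1}}G(-\pi+\pi\tfrac{2i}{\eta})$, a ``partial'' sum over the even-indexed points of the original grid.

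For $(i)$, I would write out $\overline{g'}$ at the even points using the first two displays of Lemma~\ref{restrictionderivative}, so that $\int_{\overline{\mathcal{V}_{\eta/2}}}\overline{g'}\,d\mu_{\eta/2}=\tfrac{2\pi}{\eta}\cdot\tfrac{\eta}{2\pi}\,{^{*}\!\sum_{0\le i\le \eta-1}}\bigl(g(-\pi+\pi\tfrac{2i+1}{\eta})-g(-\pi+\pi\tfrac{2i-1}{\eta})\bigr)$, with the boundary term $\tfrac{\eta}{2\pi}(f(-\pi+\tfrac{\pi}{\eta})-f(\pi-\tfrac{\pi}{\eta}))$ fitting the same pattern at $i=0$ under the cyclic convention. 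This is a telescoping sum over the \emph{odd}-indexed points of the original $\eta$-grid, which vanishes by periodicity — the internal sum of a discrete derivative over a full cycle, exactly as in part $(i)$ of the main proof. For $(ii)$, I would verify the Leibniz-type identity pointwise at each even grid point $-\pi+\pi\tfrac{2j}{\eta}$: by definition $\overline{(gh)'}(-\pi+\pi\tfrac{2j}{\eta})=(gh)'(-\pi+\pi\tfrac{2j}{\eta})$, and Lemma~\ref{discretederivativeshift}$(ii)$ gives $(gh)'=g'h^{lsh}+g^{rsh}h'$; restricting both sides to the even points and using that $\overline{uv}(-\pi+\pi\tfrac{2j}{\eta})=u(-\pi+\pi\tfrac{2j}{\eta})v(-\pi+\pi\tfrac{2j}{\eta})=\overline{u}\,\overline{v}(-\pi+\pi\tfrac{2j}{\eta})$ for any $u,v$ yields the claim. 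Part $(iii)$ then follows by combining $(i)$ and $(ii)$: integrating $\overline{(gh)'}$ against $\mu_{\eta/2}$ gives $0$ by $(i)$, and expanding via $(ii)$ gives $\int\overline{g'}\,\overline{h^{lsh}}\,d\mu_{\eta/2}=-\int\overline{g^{rsh}}\,\overline{h'}\,d\mu_{\eta/2}$; rewriting $\overline{g'}\,\overline{h^{lsh}}=\overline{(g'h^{lsh})}$ and noting, as in the passage from $(iii)$ to $(vi)$ of Lemma~\ref{discretederivativeshift}, that $g'h$ and $g^{rsh}(h')^{rsh}$ have the same restricted integral after a shift of summation index (using that $\int\overline{G^{rsh}}$-type reindexings are harmless over a full cycle), one reaches the stated form with $\overline{g^{rsh}(h')^{rsh}}$ on the right.

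The main obstacle I anticipate is bookkeeping at the boundary under the cyclic conventions ${-1\over\eta}={\eta-1\over\eta}$ and ${\eta\over\eta}={0\over\eta}$: because the restriction halves the grid, the ``wrap-around'' term of $\overline{g'}$ at $-\pi$ refers to points $\pm\tfrac{\pi}{\eta}$ of the \emph{fine} grid, which are odd-indexed and hence not themselves in the image of $\overline{()}$, so one must check carefully that the telescoping in $(i)$ still closes up correctly and that the index shift in $(iii)$ matches the term $\overline{g^{rsh}(h')^{rsh}}$ rather than, say, $\overline{gh'}$ (indeed the asymmetry in the statement — $\overline{g^{rsh}(h')^{rsh}}$ on the right of $(iii)$ rather than the cleaner $\overline{gh'}$ — reflects precisely that $\overline{f'}\neq\overline{f}'$ and $\overline{f^{rsh}}\neq\overline{f}^{rsh}$ in general, as flagged in the Remark after Lemma~\ref{restrictionderivative}). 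I would handle this by doing the boundary terms explicitly for small ranges of the index and invoking the internal induction/transfer principle only for the generic interior terms, mirroring the structure of the proof of Lemma~\ref{discretederivativeshift}.
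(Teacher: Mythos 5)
Your route is the paper's: for $(i)$ you expand the $\mu_{\eta/2}$-integral of $\overline{g'}$ as a sum over the even grid points and let it telescope over the odd-indexed fine-grid values, the wrap-around term closing the cycle (this is verbatim the paper's computation); for $(ii)$ you restrict the fine-grid Leibniz rule of Lemma \ref{discretederivativeshift}$(ii)$ pointwise, using that restriction commutes with products; and for $(iii)$ you integrate $(ii)$ and use $(i)$ to get $\int_{\overline{\mathcal{V}_{\eta/2}}}\overline{g'h^{lsh}}\,d\mu_{\eta/2}=-\int_{\overline{\mathcal{V}_{\eta/2}}}\overline{g^{rsh}h'}\,d\mu_{\eta/2}$, then pass from $h^{lsh}$ to $h$. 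The one sentence that goes astray is the parenthetical claim that ``$\int\overline{G^{rsh}}$-type reindexings are harmless over a full cycle'': after restriction this is precisely what fails (there is no $\eta/2$-analogue of Lemma \ref{discretederivativeshift}$(iv)$, since $\overline{G^{rsh}}$ samples $G$ at the odd points, which $\overline{G}$ never sees), and in any case the two integrals in $(iii)$ agree only up to sign; if you actually performed such a reindexing you would land on the false symmetric conclusion $\int\overline{g'h}=-\int\overline{gh'}$. The correct mechanism, which is the paper's, needs no reindexing of the restricted integral at all: write $h=(h^{rsh})^{lsh}$, apply the integrated identity above with $h^{rsh}$ in place of $h$, and then invoke Lemma \ref{discretederivativeshift}$(v)$, $(h^{rsh})'=(h')^{rsh}$, on the fine grid \emph{before} restricting. (The relevant template is the derivation of part $(iii)$ of Lemma \ref{discretederivativeshift} from its lsh-form identity via $(v)$, not the passage to $(vi)$.) Since your closing paragraph already identifies that $\overline{f^{rsh}}\neq\overline{f}^{rsh}$ and that the target must be $\overline{g^{rsh}(h')^{rsh}}$ rather than $\overline{gh'}$, this is the repair of one sentence rather than a change of approach.
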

\begin{proof}
For $(i)$, we have that;\\

 $\int_{\overline{\mathcal{V}_{{\eta\over 2}}}}\overline{g'}(y)d\mu_{{\eta\over 2}}(y)$\\

$={2\pi\over\eta}[{^{*}}\sum_{1\leq j\leq \eta-1}{\eta\over 2\pi}[g(-\pi+\pi({2j+1\over\eta}))-g(-\pi+\pi({2j-1\over\eta}))]$\\

$+{\eta\over 2\pi}[g(-\pi+{\pi\over\eta})-g(\pi-{\pi\over\eta})]]=0$\\

$(ii)$ is clear from the main proof and taking restrictions.\\

For $(iii)$, integrating both sides of $(ii)$ and using $(i)$, we have that;\\

$\int_{\overline{\mathcal{V}_{{\eta\over 2}}}}\overline{g'h^{lsh}}d\mu_{{\eta\over 2}}(y)=-\int_{\overline{\mathcal{V}_{{\eta\over 2}}}}\overline{g^{rsh}h'}d\mu_{{\eta\over 2}}(y)$ $(*)$\\

Then;\\

$\int_{\overline{\mathcal{V}_{{\eta\over 2}}}}\overline{g'h}d\mu_{{\eta\over 2}}(y)$\\

$=\int_{\overline{\mathcal{V}_{{\eta\over 2}}}}\overline{g'(h^{rsh})^{lsh}}d\mu_{{\eta\over 2}}(y)$\\

$=-\int_{\overline{\mathcal{V}_{{\eta\over 2}}}}\overline{g^{rsh}(h^{rsh})'}d\mu_{{\eta\over 2}}(y)$ by $(*)$\\

$=-\int_{\overline{\mathcal{V}_{{\eta\over 2}}}}\overline{g^{rsh}(h')^{rsh}}d\mu_{{\eta\over 2}}(y)$ by the main proof\\

\end{proof}

\begin{lemma}
\label{nsheat}
Given a measurable boundary conditions $f\in V(\overline{\mathcal{V}_{\eta}})$, there exists a unique measurable $F\in V(\overline{\mathcal{S}_{\eta,\nu}})$, satisfying the nonstandard heat equation;\\

${\partial F\over\partial t}={\partial^{2}f\over\partial x^{2}}$\\

on $({\overline{\mathcal{T}_{\nu}}\setminus [\nu-{1\over\nu},\nu)})\times\overline{\mathcal{V}_{\eta}}$\\

with $F(0,x)=f(x)$, for $x\in\overline{\mathcal{V}_{\eta}}$, $(*)$.\\

Moreover, if $\eta\leq\sqrt{2\pi}\nu$, and, there exists $M\in\mathcal{R}$, with $max\{f,f',f''\}\leq M$, then $max\{F,{\partial F\over\partial x},{\partial^{2} F\over\partial x^{2}}\}\leq M$.

\end{lemma}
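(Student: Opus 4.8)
The plan is to read the nonstandard heat equation off at grid points as an explicit one-step recursion in the time index, obtain existence, uniqueness and measurability from that recursion, and then prove the \emph{a priori} bound by a discrete maximum principle, the hypothesis $\eta\le\sqrt{2\pi}\,\nu$ being used precisely to make the one-step map a weighted average.

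First I would unwind Definition \ref{partials}. Writing $F_j(i)=F(-\pi+\pi i/\eta,\ j/\nu)$ and substituting the difference quotients there, the equation $\partial F/\partial t=\partial^2 F/\partial x^2$ on $(\overline{\mathcal T_\nu}\setminus[\nu-{1\over\nu},\nu))\times\overline{\mathcal V_\eta}$ becomes, for $0\le j\le\nu^2-2$ and all $i$ (indices read modulo $2\eta$, since the $x$-shifts wrap around),
\[
F_{j+1}(i)=(1-2r)\,F_j(i)+r\,F_j(i+2)+r\,F_j(i-2),
\]
where $r>0$ is the constant determined by the normalisations of the time- and space-difference quotients. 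Together with $F_0=f$ this determines $F_j$ for every $0\le j\le\nu^2-1$; since a $\mathcal D_\eta\times\mathcal C_\nu$-measurable function is constant on each product cell, this forces $F$ on all of $\overline{\mathcal S_{\eta,\nu}}$, giving uniqueness. For existence one defines $F_j$ by this recursion, extends $F$ to be constant on cells, and checks measurability: each $F_j$ is built from $f$ by finitely many shifts and linear combinations, so internal induction on $j$ together with Remark \ref{rmkpartials} (or transfer of the corresponding statement for a finite grid $\mathcal H_n\times\mathcal T_m$) shows $F$ is measurable. Note the heat equation is only imposed for $j\le\nu^2-2$, so the recursion defines $F$ up to and including the final layer $j=\nu^2-1$ with nothing left to verify there; this is what makes the problem exactly determined.

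For the ``moreover'' part the key point is that $\partial/\partial x$ commutes with $\partial/\partial t$ on the layers where the latter is a genuine forward difference: both are fixed linear combinations of coordinate shifts, and shifts in the two coordinates commute. Hence if $F$ solves the nonstandard heat equation, so do $G:=\partial F/\partial x$ and $H:=\partial^2 F/\partial x^2$ on the layers $j\le\nu^2-2$, with initial data $G_0=(F_0)'=f'$ and $H_0=f''$ (using that at $t=0$ the $x$-difference of $F$ is the $x$-difference of $f$). It therefore suffices to prove: if $g$ solves the nonstandard heat equation and $|g_0|\le M$ pointwise, then $|g_{j/\nu}|\le M$ for every $j$. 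This is where the hypothesis enters: $\eta\le\sqrt{2\pi}\,\nu$ is exactly the condition $0\le r\le{1\over2}$, so the coefficients $1-2r,\,r,\,r$ in the recursion are non-negative and sum to $1$; consequently $|g_{j+1}(i)|\le(1-2r)|g_j(i)|+r|g_j(i+2)|+r|g_j(i-2)|\le\sup_k|g_j(k)|$, and induction on $j$ gives $\sup_i|g_j(i)|\le\sup_i|g_0(i)|\le M$. Since $g$ is constant on cells, the supremum over $\overline{\mathcal S_{\eta,\nu}}$ equals the supremum over grid points, so $\sup|g|\le M$; applying this with $g=f,\,f',\,f''$ yields the bounds on $F$, $\partial F/\partial x$ and $\partial^2 F/\partial x^2$ respectively.

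I expect the main obstacle to be the bookkeeping in the maximum-principle step: verifying that the constant $r$ coming out of the normalisations in Definition \ref{partials} satisfies $r\le{1\over2}$ precisely when $\eta\le\sqrt{2\pi}\,\nu$, and that the periodic wrap-around in the $x$-shifts preserves the convex-combination structure at the seam $\{-\pi\}$, $\{\pi-\pi/\eta\}$ rather than introducing a boundary term. It is worth stressing why the commuting observation is essential: bounding $\partial F/\partial x$ directly from $\sup|F|$ only gives $\sup|\partial F/\partial x|\le(\eta/\pi)\,\sup|F|$, which is useless for $\eta$ infinite; the point is that $\partial F/\partial x$ is \emph{itself} a solution of the heat equation whose initial data $f'$ is already bounded by $M$.
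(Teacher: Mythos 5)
Your proposal follows essentially the same route as the paper: rewrite the equation at grid points as the one-step recursion $F_{j+1}(i)=rF_j(i+2)+(1-2r)F_j(i)+rF_j(i-2)$ with $r={\eta^2\over 4\pi^2\nu}$, get existence/uniqueness and measurability from the recursion, and obtain the bounds by the convex-combination (maximum-principle) argument applied also to ${\partial F\over\partial x}$ and ${\partial^2 F\over\partial x^2}$, which satisfy the same recursion with initial data $f'$, $f''$ (the paper's ``differentiate $(*)$'' step). Your only caveat is the bookkeeping on the stability constant, where the true requirement is $1-{\eta^2\over 2\pi^2\nu}\geq 0$, i.e.\ $\eta\leq\pi\sqrt{2\nu}$, a discrepancy with the stated hypothesis that is present in the paper itself.
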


\begin{proof}
Observe that, by Definition \ref{partials}, if $F:\overline{\mathcal{S}_{\eta,\nu}}\rightarrow{^{*}\mathcal{C}}$ is measurable, then;\\

${\partial^{2}F\over\partial x^{2}}(-\pi+\pi{i\over\eta},t)={\eta^{2}\over 4\pi^{2}}(F(-\pi+\pi{i+2\over\eta},t)-2F(-\pi+\pi{i\over\eta},t)+F(-\pi+\pi{i-2\over\eta},t))$\\

$(2\leq i\leq 2\eta-3), t\in\overline{\mathcal{T}_{\nu}}$, with similar results for the remaining $i$.\\

Therefore, if $F$ satisfies $(*)$, we must have;\\

$F(0,x)=f(x)$, $(x\in\overline{\mathcal{V}_{\eta}})$\\

$F({i+1\over\nu},-\pi+\pi{j\over\eta})$\\

$=F({i\over\nu},-\pi+\pi{j\over\eta})+{\eta^{2}\over 4\pi^{2}\nu}(F({i\over\nu},-\pi+\pi{j+2\over\eta})-2F({i\over\nu},-\pi+\pi{j\over\eta})+F({i\over\nu},-\pi+\pi{j-2\over\eta}))$\\

$={\eta^{2}\over 4\pi^{2}\nu}F({i\over\nu},-\pi+\pi{j+2\over\eta})+(1-{\eta^{2}\over 2\pi^{2}\nu})(F({i\over\nu},-\pi+\pi{j\over\eta})+{\eta^{2}\over 4\pi^{2}\nu}F({i\over\nu},-\pi+\pi{j-2\over\eta}))$ $(*)$\\

$(1\leq i\leq \nu^{2}-2,0\leq j\leq 2\eta-1)$\\

See also the proof of Lemma 0.5 in \cite{dep3}. The choice of $\eta$ ensures that $1-{\eta^{2}\over 2\pi^{2}\nu}\geq 0$. Hence, inductively, if $|F_{i\over\nu}|\leq M$, then, by $(*)$;\\

$|F_{i+1\over\nu}|\leq M({\eta^{2}\over 4\pi^{2}\nu}+(1-{\eta^{2}\over 2\pi^{2}\nu})+{\eta^{2}\over 4\pi^{2}\nu})=M$.\\

We can differentiate $(*)$ and replace $F$ with ${\partial F\over\partial x}$ or ${\partial^{2} F\over\partial x^{2}}$. The same argument, and the assumption on the initial conditions, gives the required bound.

\end{proof}

\begin{lemma}
\label{initialbounded}
If $f\in C^{\infty}[-\pi,\pi]$, and $f_{\eta}$ is defined on $\overline{\mathcal{V}_{\eta}}$ by;\\

$f_{\eta}(-\pi+\pi{j\over\eta})=f^{*}(-\pi+\pi{j\over\eta})$\\

$f_{\eta}(x)=f(-\pi+{\pi\over\eta}[{\eta(x+\pi)\over \pi}]$\\

where $f^{*}$ is the transfer of $f$ to ${^{*}[-\pi,\pi)}$, then there exists a constant $M\in\mathcal{R}$, such that $max\{f_{\eta},f_{\eta}',f_{\eta}''\}\leq M$. In particular, if $F$ solves the nonstandard heat equation, with initial condition $f_{\eta}$, then, $max\{F,{\partial F\over\partial x},{\partial^{2} F\over\partial x^{2}}\}\leq M$ as well.

\end{lemma}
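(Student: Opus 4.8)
The plan is to reduce the bound to two elementary real-variable estimates and then transfer. By Definition \ref{smooth}, $f\in C^{\infty}([-\pi,\pi])$ is the pullback of a smooth function on $S^{1}$, so $f$, $f'$, $f''$ extend to continuous $2\pi$-periodic functions on $\mathcal{R}$; consequently
\[
M:=\max\Bigl\{\sup_{[-\pi,\pi]}|f|,\ \sup_{[-\pi,\pi]}|f'|,\ \sup_{[-\pi,\pi]}|f''|\Bigr\}
\]
is a finite real number (this is the convention for the inequality $\max\{f,f',f''\}\le M$). I will show $\max\{f_{\eta},f_{\eta}',f_{\eta}''\}\le M$ for this $M$. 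Measurability of $f_{\eta}$ with respect to $\mathcal{D}_{\eta}$ is immediate from the defining schema, and measurability of $f_{\eta}'$ and $f_{\eta}''$ then follows from Remark \ref{rmkpartials}, so $f_{\eta}\in V(\overline{\mathcal{V}_{\eta}})$ and it makes sense to speak of the associated solution $F$ via Lemma \ref{nsheat}.

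The key step is to record three standard facts, each of which is a first-order sentence in the structure $\mathcal{R}$ with $f,f',f''$ as distinguished functions and a natural-number parameter $n$, reading $f$ as its $2\pi$-periodic extension so that the wraparound clauses of Definition \ref{partials} are covered. Writing $h=\pi/n$ and letting $a$ range over the grid $\{-\pi+\pi j/n:0\le j\le 2n-1\}$: (a) $|f(a)|\le M$; (b) $\bigl|\tfrac{1}{2h}(f(a+h)-f(a-h))\bigr|\le M$, which follows by writing the central difference as $\tfrac12\bigl(\tfrac{f(a+h)-f(a)}{h}+\tfrac{f(a)-f(a-h)}{h}\bigr)$ and applying the mean value theorem to each term; (c) $\bigl|\tfrac{1}{4h^{2}}(f(a+2h)-2f(a)+f(a-2h))\bigr|\le M$, which follows from Taylor's theorem with Lagrange remainder, $f(a\pm 2h)=f(a)\pm 2hf'(a)+2h^{2}f''(\zeta_{\pm})$, so that the quotient equals $\tfrac12(f''(\zeta_{+})+f''(\zeta_{-}))$. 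Transferring (a), (b), (c) and specialising the parameter $n$ to the infinite $\eta$, then comparing with Definition \ref{partials} (and with the explicit formula for $\partial^{2}/\partial x^{2}$ displayed in the proof of Lemma \ref{nsheat}), we obtain $|f_{\eta}|\le M$, $|f_{\eta}'|\le M$, $|f_{\eta}''|\le M$ at every grid point; since $f_{\eta}$, $f_{\eta}'$, $f_{\eta}''$ are each constant on the defining intervals $[-\pi+\pi i/\eta,-\pi+\pi(i+1)/\eta)$, the bounds hold throughout $\overline{\mathcal{V}_{\eta}}$. This gives the first assertion.

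For the second assertion, $f_{\eta}$ is a measurable boundary condition in $V(\overline{\mathcal{V}_{\eta}})$ with $\max\{f_{\eta},f_{\eta}',f_{\eta}''\}\le M$ and $M\in\mathcal{R}$; under the hypothesis $\eta\le\sqrt{2\pi}\,\nu$ required for the ``moreover'' clause of Lemma \ref{nsheat}, that clause applies verbatim and yields $\max\{F,\partial F/\partial x,\partial^{2}F/\partial x^{2}\}\le M$ for the unique $F\in V(\overline{\mathcal{S}_{\eta,\nu}})$ solving the nonstandard heat equation with $F(0,\cdot)=f_{\eta}$.

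I expect the only delicate point to be bookkeeping rather than analysis: one must read $f$ as a $2\pi$-periodic function before transferring, so that the boundary/wraparound cases of the discrete derivatives in Definition \ref{partials} are genuinely accounted for, and one must check that the constant $M$ depends only on $f$ and not on $\eta$, so that it is a genuine standard real and hence meets the hypothesis of Lemma \ref{nsheat}. No estimate used here is deeper than the mean value theorem, Taylor's theorem, and transfer.
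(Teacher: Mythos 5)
Your proposal is correct and follows essentially the same route as the paper: bound the discrete difference quotients by classical derivative bounds via the mean value theorem/Taylor's theorem, transfer the resulting first-order estimate to infinite $\eta$, and then invoke the ``moreover'' clause of Lemma \ref{nsheat} for $F$. The only (harmless) difference is presentational — you transfer the bound $\le M$ directly, while the paper transfers the Taylor estimate to get $(f_{\eta})'\simeq (f')_{\eta}$ and deduces boundedness from that; your explicit treatment of the periodic wraparound is a bookkeeping point the paper leaves implicit.
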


\begin{proof}
We have, for $x\in [-\pi,\pi)$, using Taylor's Theorem, that;\\

$|{1\over 2h}(f(x+h)-f(x-h))-f'(x)|$\\

$={1\over 2h}(f(x)+hf'(x)+{h^{2}\over 2}f''(c)-f(x)+hf'(x)-{h^{2}\over 2}f''(c'))-f'(x)|$\\

$\leq hK$\\

where $K=max_{[-\pi,\pi)}f''$. By transfer, it follows, that, for infinite $\eta$, $(f_{\eta})'\simeq (f')_{\eta}$. Clearly $(f')_{\eta}$ is bounded, as $f'$ is, which gives the result for $(f_{\eta})'$. The case for $(f_{\eta})''$ is similar. The final result is immediate from Lemma \ref{nsheat}.
\end{proof}

\begin{defn}
\label{nstransf}
We let $\overline{\mathcal{Z}_{\eta}}=\{m\in{{^{*}}\mathcal{Z}}:-\eta\leq m\leq \eta-1\}$ Given a measurable $f:\overline{\mathcal{V}_{\eta}}\rightarrow{^{*}\mathcal{C}}$, we define, for $m\in\mathcal{Z}_{\eta}$, the $m'th$ discrete Fourier coefficient to be;\\

$\hat{f}_{\eta}(m)={1\over 2\pi}\int_{\overline{\mathcal{V}_{\eta}}}f(y)exp_{\eta}(-iym)d\mu_{\eta}(y)$\\

Transposing Lemma 0.9 of \cite{dep5}, (\footnote{\label{measure}\emph{We have there that the measure on} $\overline{\mathcal{S}_{\eta}}=\lambda_{\eta}$. The result follows using the scalar map $p:\overline{\mathcal{V}_{\eta}}\rightarrow\overline{\mathcal{S}_{\eta}}$, $p(x)={x\over\pi}$, and the fact that $p_{*}(\mu_{\eta})=\lambda_{\eta}$});\\

$f(x)=\sum_{m\in\mathcal{Z}_{\eta}}\hat{f}_{\eta}(m)exp_{\eta}(ixm)$ $(*)$\\

Given a measurable $f:\overline{\mathcal{S}_{\eta,\nu}}\rightarrow{^{*}\mathcal{C}}$, we define the nonstandard vertical Fourier transform $\hat{f}:\overline{\mathcal{T}_{\nu}}\times\overline{\mathcal{Z}_{\eta}}\rightarrow{^{*}\mathcal{C}}$ by;\\

$\hat{f}(t,m)={1\over 2\pi}\int_{\overline{\mathcal{V}_{\eta}}}f(t,x)exp_{\eta}(-ixm)d\mu_{\eta}(x)$\\

and, given a measurable $g:\overline{\mathcal{T}_{\nu}}\times\overline{\mathcal{Z}_{\eta}}\rightarrow{^{*}\mathcal{C}}$, we define the nonstandard inverse vertical Fourier transform by;\\

$\check{g}(t,x)=\sum_{m\in\mathcal{Z}_{\eta}}g(t,m)exp_{\eta}(ixm)$\\

so that, by $(*)$, $f=\check{\hat{f}}$\\

Similar to Definition 0.20 of \cite{dep4}, for $f\in\overline{\mathcal{V_{\eta}}}$, we let $\phi_{\eta}:\overline{\mathcal{Z}_{\eta}}\rightarrow{^{*}\mathcal{C}}$ be defined by;\\

$\phi_{\eta}(m)={\eta\over 2\pi}(exp_{\eta}(-im{\pi\over\eta})-exp_{\eta}(im{\pi\over\eta}))$\\

We let $\psi_{\eta}:\overline{\mathcal{Z}_{{\eta\over 2}}}\rightarrow{^{*}\mathcal{C}}$ be defined by;\\

$\psi_{\eta}(m)={\eta\over 2\pi}(1-exp_{\eta}(im{2\pi\over\eta}))$\\

and, we let $U_{\eta}:\overline{\mathcal{Z}_{{\eta\over 2}}}\rightarrow{^{*}\mathcal{C}}$ be defined by;\\

$U_{\eta}(m)=exp_{\eta}(-im{2\pi\over\eta}))$\\

\end{defn}

The following is the analogue of Lemma 0.14 in \cite{dep5}, using the definition of the discrete derivative in Definition \ref{partials} and the discrete Fourier coefficients from Definition \ref{nstransf};\\

\begin{lemma}
\label{sine}
Let $f:\overline{\mathcal{V}_{\eta}}\rightarrow{^{*}\mathcal{C}}$ be measurable; then, for $m\in\mathcal{Z}_{\eta}$,\\

$\hat{f''}(m)=\phi_{\eta}^{2}(m)\hat{f}(m)$\\

\end{lemma}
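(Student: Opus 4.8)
The plan is to compute the discrete Fourier coefficient of $f''$ directly from the definitions, reducing everything to the first-order case via the composition $\hat{f''}(m) = \phi_\eta(m)\,\hat{f'}(m)$ and then $\hat{f'}(m) = \phi_\eta(m)\,\hat f(m)$, so the crux is the single-derivative identity
\[
\hat{f'}(m) = \phi_\eta(m)\,\hat f(m).
\]
First I would unwind the definition of $\hat{f'}(m)$ from Definition \ref{nstransf}:
\[
\hat{f'}(m) = \frac{1}{2\pi}\int_{\overline{\mathcal{V}_\eta}} f'(y)\,\mathrm{exp}_\eta(-iym)\,d\mu_\eta(y),
\]
and expand $f'$ using the difference-quotient formula of Definition \ref{partials}. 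The $*$-finite sum $\frac{\pi}{\eta}\sum_j \frac{\eta}{2\pi}\bigl(f(-\pi+\pi\tfrac{j+1}{\eta})-f(-\pi+\pi\tfrac{j-1}{\eta})\bigr)\,\mathrm{exp}_\eta(-i(-\pi+\pi\tfrac{j}{\eta})m)$ is then reorganised by an index shift (Abel summation / telescoping), exactly as in the product-rule computation in the proof of Lemma \ref{discretederivativeshift}. Care is needed at the wrap-around indices $j=0,1,2\eta-2,2\eta-1$, where the special boundary clauses of Definition \ref{partials} for $f'(-\pi)$ and $f'(\pi-\tfrac{\pi}{\eta})$ must be matched against the periodicity of $\mathrm{exp}_\eta$ on $\overline{\mathcal{V}_\eta}$; the convention ${-1\over\eta}={\eta-1\over\eta}$ makes these terms fit the general pattern.

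After the index shift, each retained term of $f$ picks up a factor $\mathrm{exp}_\eta(-iym)$ evaluated at two neighbouring grid points, producing the combination
\[
\frac{\eta}{2\pi}\bigl(\mathrm{exp}_\eta(-im\tfrac{\pi}{\eta}) - \mathrm{exp}_\eta(im\tfrac{\pi}{\eta})\bigr) = \phi_\eta(m),
\]
which is precisely the definition of $\phi_\eta(m)$ in Definition \ref{nstransf}. Pulling this scalar out of the $*$-finite sum leaves exactly $\frac{1}{2\pi}\int_{\overline{\mathcal{V}_\eta}} f(y)\,\mathrm{exp}_\eta(-iym)\,d\mu_\eta(y) = \hat f(m)$, giving $\hat{f'}(m)=\phi_\eta(m)\hat f(m)$. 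Alternatively, and more cleanly, I would prove this by transfer: the identity $\widehat{f'}(m)=\phi_n(m)\hat f(m)$ holds for every standard $n\in\mathcal{N}$ and every function $f$ on the finite grid — this is the cited Lemma 0.14 of \cite{dep5}, up to the rescaling $p(x)=x/\pi$ noted in footnote \ref{measure} — so it is a first-order statement quantifying over $n$, $f$, $m$, and transfers verbatim to $\eta\in{^*\mathcal{N}}\setminus\mathcal{N}$.

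Granting the first-order identity, the lemma follows in one line: since $f''=(f')'$ as discrete derivatives (Definition \ref{partials} applied twice), $\hat{f''}(m) = \phi_\eta(m)\,\hat{f'}(m) = \phi_\eta(m)\cdot\phi_\eta(m)\,\hat f(m) = \phi_\eta^2(m)\,\hat f(m)$. The main obstacle is bookkeeping at the boundary indices: one must check that the telescoping of the difference quotient against $\mathrm{exp}_\eta(-iym)$ closes up correctly using the periodicity conventions, so that no leftover boundary terms survive. This is routine but must be done carefully; the transfer route sidesteps it entirely by inheriting the already-verified finite identity, and I would present that as the main argument, relegating the direct computation to a remark.
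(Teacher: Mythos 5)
Your proposal is essentially the paper's own argument: the paper likewise reduces the lemma to the first-derivative identity — using discrete integration by parts (Lemma \ref{discretederivativeshift}(iii)) together with the relation $(exp_{\eta})'(-ixm)=exp_{\eta}(-ixm)\phi_{\eta}(m)$, which is exactly the index-shift/telescoping computation you describe — and then applies it twice, with the transfer from the finite case (Lemma 0.14 of \cite{dep5}) being the route the paper itself gestures at. One small correction: the intermediate identity comes out as $\hat{f'}(m)=-\phi_{\eta}(m)\hat{f}(m)$, since your reindexing actually produces the factor ${\eta\over 2\pi}\bigl(exp_{\eta}(im{\pi\over\eta})-exp_{\eta}(-im{\pi\over\eta})\bigr)=-\phi_{\eta}(m)$ rather than $+\phi_{\eta}(m)$; the sign disappears upon squaring, so your conclusion $\hat{f''}(m)=\phi_{\eta}^{2}(m)\hat{f}(m)$ is unaffected.
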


\begin{proof}

We have, using Lemma \ref{discretederivativeshift}(iii), that;\\

$(\hat{f'})(m)={1\over 2\pi}\int_{\overline{\mathcal{V}_{\eta}}}f'(x)exp_{\eta}(-ixm)d\mu_{\eta}(y)$\\

$=-{1\over 2\pi}\int_{\overline{\mathcal{V}_{\eta}}}f(x)(exp_{\eta})'(-ixm)d\mu_{\eta}(x)$\\

A simple calculation shows that;\\

$(exp_{\eta})'(-ixm)=exp_{\eta}(-ixm)\phi_{\eta}(m)$\\

Therefore;\\

$(\hat{f'})(m)=-\phi_{\eta}(m)\hat{f}(m)$\\

Then $\hat{f''}(m)$\\

$=-\phi_{\eta}(m)\hat{f'}(m)$\\

$=\phi_{\eta}^{2}(m)\hat{f}(m)$\\

as required.
\end{proof}

\begin{lemma}
\label{restrictedsine}
If $f:\overline{\mathcal{V}_{\eta}}\rightarrow{^{*}\mathcal{C}}$ is measurable, then, for $m\in\mathcal{Z}_{{\eta\over 2}}$, we have that;\\

$\hat{\overline{f''}}(m)=\psi_{\eta}(m)^{2}U_{\eta}(m)(\hat{\overline{f}}(m))$\\

\end{lemma}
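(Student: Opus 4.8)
The plan is to mimic the argument of Lemma \ref{sine}, but using the restricted objects $\overline{f}$, $\overline{f'}$, $\overline{f^{lsh}}$, $\overline{f^{rsh}}$ and their integral identities from the lemma preceding Definition \ref{nsheat}, together with the explicit formulas in Lemma \ref{restrictionderivative}. First I would compute $\hat{\overline{f'}}(m)$ by integrating by parts: write
\[
\hat{\overline{f'}}(m)={1\over 2\pi}\int_{\overline{\mathcal{V}_{{\eta\over 2}}}}\overline{f'}(y)exp_{\eta}(-iym)d\mu_{{\eta\over 2}}(y)
\]
and apply the restricted version of summation by parts, i.e. part $(iii)$ of the lemma after Lemma \ref{restrictionderivative}, which gives
\[
\int_{\overline{\mathcal{V}_{{\eta\over 2}}}}\overline{g'h}\,d\mu_{{\eta\over 2}}=-\int_{\overline{\mathcal{V}_{{\eta\over 2}}}}\overline{g^{rsh}(h')^{rsh}}\,d\mu_{{\eta\over 2}},
\]
with $g=f$ and $h=exp_{\eta}(-i(\cdot)m)$. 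The point is that, unlike the unrestricted case, the restricted integration by parts produces a $^{rsh}$-shift on both factors, which is exactly where the extra correction factor $U_{\eta}(m)=exp_{\eta}(-im{2\pi\over\eta})$ will come from.

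Next I would carry out the "simple calculation" of $(exp_{\eta}(-i(\cdot)m))'$ restricted: from Lemma \ref{restrictionderivative}, $\overline{(exp_{\eta})'}(-\pi+\pi{2i\over\eta})={\eta\over 2\pi}(exp_{\eta}(-i(-\pi+\pi{2i+1\over\eta})m)-exp_{\eta}(-i(-\pi+\pi{2i-1\over\eta})m))$, and factoring out $exp_{\eta}(-i(-\pi+\pi{2i\over\eta})m)$ this equals $exp_{\eta}(-i(-\pi+\pi{2i\over\eta})m)\cdot{\eta\over 2\pi}(exp_{\eta}(-im{\pi\over\eta})-exp_{\eta}(im{\pi\over\eta}))=\overline{exp_{\eta}(-i(\cdot)m)}\cdot\phi_{\eta}(m)$. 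Taking the $^{rsh}$-shift of this, the shift operator pulls back the argument by $2\pi/\eta$, contributing the factor $exp_{\eta}(im{2\pi\over\eta})$... here I must be careful with signs: a right shift replaces $exp_{\eta}(-i x m)$ at $x$ by its value at $x-{2\pi\over\eta}$, i.e. multiplies by $exp_{\eta}(im{2\pi\over\eta})$, while I want $U_{\eta}(m)=exp_{\eta}(-im{2\pi\over\eta})$; so I expect the $U_{\eta}$ factor actually to arise from shifting $f$ rather than the exponential, or from a compensating reindexing, and tracking this precisely is the main obstacle. Concretely I would establish
\[
\hat{\overline{f'}}(m)=-\phi_{\eta}(m)\,\widehat{\overline{f^{rsh}}}(m)=-\phi_{\eta}(m)U_{\eta}(m)\hat{\overline{f}}(m)
\]
using that $\overline{f^{rsh}}$ is itself a restricted function whose discrete Fourier coefficient is, by the shift formula $\overline{f^{rsh}}(-\pi+\pi{2j\over\eta})=f(-\pi+\pi{2j-1\over\eta})$ from Lemma \ref{restrictionderivative}, a multiple $U_{\eta}(m)$ of $\hat{\overline{f}}(m)$ — this last identity I would verify by a short direct reindexing of the defining sum.

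Finally I would iterate: $\hat{\overline{f''}}(m)=\hat{\overline{(f')'}}(m)$, apply the same computation with $f'$ in place of $f$, obtaining $\hat{\overline{f''}}(m)=(-\phi_{\eta}(m)U_{\eta}(m))\cdot(-\phi_{\eta}(m))\hat{\overline{f}}(m)$ — wait, one must decide whether both steps pick up a $U_{\eta}$ or only one; matching the target $\psi_{\eta}(m)^2 U_{\eta}(m)\hat{\overline{f}}(m)$ shows that exactly one $U_{\eta}$ and a $\psi_{\eta}$ (not $\phi_{\eta}$) squared survives, which indicates the correct bookkeeping is that the restricted derivative of $exp_{\eta}$ should be re-expressed via $\psi_{\eta}$ on the coarser lattice $\overline{\mathcal{Z}_{\eta/2}}$ rather than via $\phi_{\eta}$, and the second differentiation, already living on the restricted lattice, contributes the $\psi_{\eta}$ without a further shift. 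So the key algebraic identity to pin down is $\overline{(exp_{\eta})'}=\psi_{\eta}(m)U_{\eta}(m)^{1/2}\overline{exp_{\eta}}$ or its correct variant; once that sign-and-shift identity is settled, squaring and combining is immediate. The main obstacle, then, is purely the careful determination of which shift factors attach at which stage — a finite but delicate reindexing — and I would do this by writing out the restricted sums explicitly for the generic index $j$ with $1\le j\le \eta-1$ and separately checking the boundary index, exactly as in the proof of the lemma following Lemma \ref{restrictionderivative}.
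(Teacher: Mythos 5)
Your overall strategy (two restricted integrations by parts, tracking the shift factors) is the paper's strategy, but the proposal contains a genuine gap at exactly the point you yourself flag as the main obstacle, and the specific identity you propose to patch it with is false. You claim $\widehat{\overline{f^{rsh}}}(m)=U_{\eta}(m)\hat{\overline{f}}(m)$, to be "verified by a short direct reindexing". It cannot be: by Lemma \ref{restrictionderivative}, $\overline{f^{rsh}}(-\pi+\pi{2j\over\eta})=f(-\pi+\pi{2j-1\over\eta})$, i.e.\ the restriction of a \emph{single} right shift samples $f$ at the odd points of the fine lattice, which are not values of $\overline{f}$ at all, so no reindexing of the coarse sum relates $\widehat{\overline{f^{rsh}}}(m)$ to $\hat{\overline{f}}(m)$ in general. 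Likewise your candidate identity $\overline{(exp_{\eta})'}=\psi_{\eta}(m)U_{\eta}(m)^{1/2}\overline{exp_{\eta}}$ involves an object ($U_{\eta}^{1/2}$) the paper never defines and is not established; at that point you are matching factors against the target formula rather than deriving them, so the $\phi_{\eta}$-versus-$\psi_{\eta}$ and one-$U_{\eta}$-versus-two bookkeeping --- which is the entire content of the lemma --- is left unresolved.

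The paper's proof settles the bookkeeping differently, and you should compare: after the restricted integration by parts one does \emph{not} convert $\widehat{\overline{f^{rsh}}}$ into $\hat{\overline{f}}$; instead one computes $\overline{{exp'_{\eta}}^{rsh}}$ at a coarse point $-\pi+\pi{2j\over\eta}$ directly. Since this is $exp'_{\eta}$ evaluated at the odd point $-\pi+\pi{2j-1\over\eta}$, it is a difference over the two neighbouring even points, ${\eta\over 2\pi}\bigl(exp_{\eta}(-im(-\pi+\pi{2j\over\eta}))-exp_{\eta}(-im(-\pi+\pi{2j-2\over\eta}))\bigr)=\psi_{\eta}(m)\,\overline{exp_{\eta}}(-im(-\pi+\pi{2j\over\eta}))$; this is why $\psi_{\eta}$ (not $\phi_{\eta}$) appears and why no half-shift of the exponential is needed. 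This gives $\hat{\overline{f'}}(m)=\psi_{\eta}(m)\,\widehat{\overline{f^{rsh}}}(m)$ (up to the sign from part $(iii)$, which cancels on iteration), and the shift is deferred onto $f$: iterating and using $(f')^{rsh}=(f^{rsh})'$ yields $\hat{\overline{f''}}(m)=\psi_{\eta}(m)^{2}\,\widehat{\overline{f^{rsh^{2}}}}(m)$. Only the \emph{double} shift $f^{rsh^{2}}$, whose restriction is $\overline{f}$ moved by one coarse step, admits the reindexing you had in mind, producing the single factor $U_{\eta}(m)$ and hence $\psi_{\eta}(m)^{2}U_{\eta}(m)\hat{\overline{f}}(m)$. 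So the missing idea is precisely to postpone the shift until it has accumulated to a full coarse-lattice step; with your single-shift identity the argument as proposed would fail.
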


\begin{proof}
 we have, using (iii) in footnote 3, that;\\

$\hat{\overline{f'}}(m)={1\over 2\pi}\int_{\overline{\mathcal{V}_{{\eta\over 2}}}}\overline{f'}(x)exp_{{\eta\over 2}}(-ixm)d\mu_{{\eta\over 2}}(x)$\\

$={1\over 2\pi}\int_{\overline{\mathcal{V}_{{\eta\over 2}}}}\overline{f'}(x)\overline{exp_{\eta}}(-ixm)d\mu_{{\eta\over 2}}(x)$\\

$={1\over 2\pi}\int_{\overline{\mathcal{V}_{{\eta\over 2}}}}\overline{f^{rsh}}(x)\overline{{exp'_{\eta}}^{rsh}}(-ixm)d\mu_{{\eta\over 2}}(x)$\\

We calculate;\\

$\overline{{exp'_{\eta}}^{rsh}}(-im(-\pi+\pi{2j\over\eta}))$\\

$={exp'_{\eta}}(-im(-\pi+\pi{2j-1\over\eta}))$\\

$={\eta\over 2\pi}({exp_{\eta}}(-im(-\pi+\pi{2j\over\eta}))-{exp_{\eta}}(-im(-\pi+\pi{2j-2\over\eta})))$\\

$={exp_{\eta}}(-im(-\pi+\pi{2j\over\eta}))[{\eta\over 2\pi}(1-exp_{\eta}(im({2\pi\over\eta})))]$\\

$=\psi_{\eta}(m)\overline{exp_{\eta}}(-im(-\pi+\pi{2j\over\eta}))$\\

$\hat{\overline{f'}}(m)={1\over 2\pi}\psi_{\eta}(m)\int_{\overline{\mathcal{V}_{{\eta\over 2}}}}\overline{f^{rsh}}(x)\overline{{exp_{\eta}}}(-ixm)d\mu_{{\eta\over 2}}(x)$\\

$=\psi_{\eta}(m)(\hat{\overline{f^{rsh}}}(m))$\\

It follows that;\\

$\hat{\overline{f''}}(m)=\psi_{\eta}(m)(\hat{\overline{(f')^{rsh}}}(m))$\\

$=\psi_{\eta}(m)(\hat{\overline{(f^{rsh})'}}(m))$\\

$=\psi_{\eta}(m)^{2}(\hat{\overline{f^{rsh^{2}}}}(m))$\\

We calculate;\\

$\hat{\overline{f^{rsh^{2}}}}(m)$\\

$={1\over 2\pi}\int_{\overline{\mathcal{V}_{{\eta\over 2}}}}\overline{f^{rsh^{2}}}(x)\overline{exp_{\eta}}(-ixm)d\mu_{{\eta\over 2}}(x)$\\

$={1\over 2\pi}exp_{\eta}({-2\pi im\over\eta})\int_{\overline{\mathcal{V}_{{\eta\over 2}}}}\overline{f^{rsh^{2}}}(x)\overline{exp_{\eta}^{rsh^{2}}}(-ixm)d\mu_{{\eta\over 2}}(x)$\\

$={1\over 2\pi}U_{\eta}(m)\int_{\overline{\mathcal{V}_{{\eta\over 2}}}}\overline{f}(x)\overline{exp_{\eta}}(-ixm)d\mu_{{\eta\over 2}}(x)$\\

$=U_{\eta}(m)\hat{\overline{f}}(m)$\\

Hence;\\

$\hat{\overline{f''}}(m)=\psi_{\eta}(m)^{2}U_{\eta}(m)(\hat{\overline{f}}(m))$\\

as required.

\end{proof}

\begin{lemma}
\label{decayrate}

If $f\in V(\overline{\mathcal{V}_{\eta}})$, with $f''$ bounded, then, there exists a constant $F\in\mathcal{R}$, with;\\

$|\hat{\overline{f}}(m)|\leq {F\over m^{2}}$, for $m\in\mathcal{Z}_{{\eta\over 2}}$.\\

Moreover;\\

$({^{\circ}\overline{f}})(x)=\sum_{m\in\mathcal{Z}}{^{\circ}((\hat{\overline{f}})(m))}exp(im{^{\circ}x})$, $x\in\overline{\mathcal{V}_{\eta}}$.\\

\end{lemma}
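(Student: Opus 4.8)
The plan is to obtain the decay estimate from the second-derivative formula in Lemma~\ref{restrictedsine} together with the boundedness hypothesis on $f''$. First I would note that $\psi_\eta(m)=\frac{\eta}{2\pi}(1-\exp_\eta(im\frac{2\pi}{\eta}))$, so a direct computation gives $|\psi_\eta(m)|^2 = \frac{\eta^2}{4\pi^2}|1-\exp_\eta(im\frac{2\pi}{\eta})|^2 = \frac{\eta^2}{4\pi^2}\cdot 2(1-\cos(\frac{2\pi m}{\eta})) = \frac{\eta^2}{\pi^2}\sin^2(\frac{\pi m}{\eta})$; since $|U_\eta(m)|=1$, this is an internal quantity, and by transfer of the elementary inequality $\sin^2\theta \geq (2\theta/\pi)^2$ valid for $|\theta|\le \pi/2$ (and a separate easy bound when $m$ is comparable to $\eta/2$, using $|m|\le \eta/2$ on $\mathcal{Z}_{\eta/2}$), we get $|\psi_\eta(m)|^2 \geq c\, m^2$ for some standard constant $c>0$ and all nonzero $m\in\mathcal{Z}_{\eta/2}$. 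Then from $\hat{\overline{f''}}(m)=\psi_\eta(m)^2 U_\eta(m)\hat{\overline{f}}(m)$ we obtain $|\hat{\overline{f}}(m)| = |\hat{\overline{f''}}(m)| / |\psi_\eta(m)|^2 \leq |\hat{\overline{f''}}(m)|/(c m^2)$. Finally, since $f''$ is bounded by hypothesis, $|\hat{\overline{f''}}(m)| = \frac{1}{2\pi}|\int_{\overline{\mathcal{V}_{\eta/2}}} \overline{f''}\,\exp_{\eta/2}(-iym)\,d\mu_{\eta/2}| \leq \frac{1}{2\pi}\cdot 2\pi \cdot \sup|f''| = \sup|f''|$, which is a standard real number; setting $F = \sup|f''|/c$ gives the first claim, for $m\ne 0$, and the $m=0$ case is handled trivially by absorbing it into a possibly larger $F$ or by treating it separately.

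For the second assertion, I would start from the nonstandard Fourier inversion formula $(*)$ of Definition~\ref{nstransf} applied to $\overline{f}$ on $\overline{\mathcal{V}_{\eta/2}}$, namely $\overline{f}(x)=\sum_{m\in\mathcal{Z}_{\eta/2}}\hat{\overline{f}}(m)\exp_{\eta/2}(ixm)$, and take standard parts of both sides. The point is that the estimate $|\hat{\overline{f}}(m)|\le F/m^2$ just established gives a uniform (internal) domination of the tail of the series by the convergent standard series $\sum F/m^2$, so the standard part of the $*$-finite sum equals the convergent standard infinite sum $\sum_{m\in\mathcal{Z}} {}^{\circ}(\hat{\overline{f}}(m))\, \exp(im\,{}^{\circ}x)$; one also uses $\exp_{\eta/2}(ixm)\simeq \exp(im\,{}^{\circ}x)$ for each fixed standard $m$ and noninfinite $x$, since $\exp_{\eta/2}$ is the nonstandard exponential agreeing with the transfer of $\exp$ on the relevant lattice. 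This is essentially a nonstandard dominated-convergence argument for $*$-finite sums: splitting $\mathcal{Z}_{\eta/2}$ into $|m|\le M$ (a standard finite set, handled termwise) and $|m|>M$ (dominated by $2\sum_{m>M}F/m^2$, which is small for $M$ large, uniformly), then letting $M\to\infty$.

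The main obstacle I anticipate is making the lower bound $|\psi_\eta(m)|^2 \geq c m^2$ genuinely uniform over all of $\mathcal{Z}_{\eta/2}$, including the regime where $|m|$ is a nonnegligible fraction of $\eta$: there $\sin(\pi m/\eta)$ need not be comparable to $\pi m/\eta$ (it can even be as small as $\sin(\pi/2 \pm \text{something})$ near $m\approx\eta/2$, but there $m^2\approx\eta^2/4$ is huge, so one must check that $\frac{\eta^2}{\pi^2}\sin^2(\pi m/\eta)$ still dominates $m^2$; in fact for $|m|\le\eta/2$ one has $\sin^2(\pi m/\eta)\ge (2m/\eta)^2$ by concavity of $\sin$ on $[0,\pi/2]$, which gives exactly $|\psi_\eta(m)|^2 \ge \frac{\eta^2}{\pi^2}\cdot\frac{4m^2}{\eta^2} = \frac{4m^2}{\pi^2}$, clean and uniform). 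So once the correct elementary inequality $\sin\theta\ge \frac{2}{\pi}\theta$ on $[0,\pi/2]$ is transferred, uniformity is automatic with $c=4/\pi^2$; the remaining care is just bookkeeping the $m=0$ term and confirming that $\overline{\exp_\eta} = \exp_{\eta/2}$ (used implicitly in Lemma~\ref{restrictedsine}) so that the coefficients $\hat{\overline{f}}(m)$ in the two formulas genuinely match. The rest is the routine nonstandard passage to the limit sketched above.
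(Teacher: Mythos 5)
Your proposal is correct and takes essentially the same route as the paper: the decay estimate comes from Lemma \ref{restrictedsine} combined with the boundedness of $\overline{f''}$ and a lower bound $|\psi_{\eta}(m)|\geq c|m|$ (which you derive via $\sin\theta\geq \frac{2}{\pi}\theta$ and transfer, where the paper simply cites $m\leq|\psi_{\eta}(m)|\leq 2m$ and $|U_{\eta}(m)|=1$ from its earlier reference), and the representation formula is obtained from the nonstandard inversion formula by the same splitting into a standard finite block, handled termwise by $S$-continuity of the exponential, plus two tails dominated by $\sum F/m^{2}$. Your explicit treatment of the $m=0$ term is a minor tidiness improvement over the paper's write-up, not a different argument.
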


\begin{proof}
Using results of \cite{dep5}, we have that $m\leq |\psi_{\eta}(m)|\leq 2m$, and $|U_{\eta}(m)|=1$ for $|m|\leq{\eta\over 2}$. As $f''$ is bounded, so is $\overline{f''}$, so  $|\hat{\overline{f''}}(m)|\leq F\in\mathcal{R}$. This implies, by the result of Lemma \ref{restrictedsine} that;\\

$|\hat{\overline{f}}(m)|\leq {F\over m^{2}}$. $(*)$\\

for $m\in\mathcal{Z}_{\eta\over 2}$, as required. Using the Inversion Theorem from Definition \ref{nstransf}, we have that;\\

$\overline{f}(x)={^{*}\sum}_{m\in\mathcal{Z}_{\eta\over 2}}\hat{\overline{f}}(m)exp_{{\eta\over 2}}(ixm)=M$, $(x\in\overline{\mathcal{V}_{\eta}})$\\

Let $L=\sum_{m\in\mathcal{Z}}{^{\circ}(\hat{\overline{f}}(m))}exp(im{^{\circ}x})$\\

If $\epsilon>0$, we have, using the result of \ref{decayrate}, and the fact that $exp_{{\eta\over 2}}$ is $S$-continuous for $m\in\mathcal{Z}$,  that;\\

$|M-L|\leq |M-M_{n}|+|M_{n}-L_{n}|+|L-L_{n}|$\\

$\leq {^{*}\sum}_{n+1\leq |m|\leq {\eta\over 2}}{F\over m^{2}}+\sum_{m=1}^{n}\delta_{i}+\sum_{|m|\geq n+1}{F+1\over m^{2}}$ $(\delta_{i}\simeq 0)$\\

$\leq {2F({1\over n}-{2\over \eta})}+\delta+{2(F+1)\over n}$ $(\delta\simeq 0)$\\

$\leq {4(F+1)\over n}<\epsilon$\\

for $n>{\epsilon\over 4(F+1)}$, $n\in\mathcal{N}$. As $\epsilon$ was arbitrary, we obtain the result.

\end{proof}

\begin{lemma}
\label{coefficient}
If $F$ solves the nonstandard heat equation, with initial condition $f$, bounded and $S$-continuous, such that ${^{\circ}f}(x)=g({^{\circ}x})$, where $g$ is continuous and bounded on $[-\pi,\pi]$, then;\\

${^{\circ}(\hat{\overline{F}}(m,t))}=e^{-m^{2}{^{\circ}t}}(\hat{g})(m)$\\

for $m\in\mathcal{Z}$ and finite $t$.\\

\end{lemma}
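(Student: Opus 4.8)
The plan is to pass to the nonstandard vertical Fourier side, where the discrete heat equation decouples into a system of one-step recurrences in $t$, one for each frequency $m$, and then take standard parts. First I would apply $\hat{()}$ (the vertical Fourier transform of Definition \ref{nstransf}) to the defining recurrence $(*)$ for $F$ from the proof of Lemma \ref{nsheat}; since $\partial/\partial t$ in the discrete setting is the forward difference $\nu(F_{(i+1)/\nu}-F_{i/\nu})$ and the transform is linear and commutes with the $t$-variable, I get
\[
\hat{\overline{F}}\!\left(m,\tfrac{i+1}{\nu}\right)=\hat{\overline{F}}\!\left(m,\tfrac{i}{\nu}\right)+\tfrac{1}{\nu}\,\psi_\eta(m)^2 U_\eta(m)\,\hat{\overline{F}}\!\left(m,\tfrac{i}{\nu}\right),
\]
using Lemma \ref{restrictedsine} to replace the second spatial difference by multiplication by $\psi_\eta(m)^2U_\eta(m)$ on the restricted grid. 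Hence $\hat{\overline{F}}(m,i/\nu)=\bigl(1+\tfrac{1}{\nu}\psi_\eta(m)^2U_\eta(m)\bigr)^{i}\,\hat{\overline{f}}(m)$, and with $i=[\nu t]$ this is an explicit closed form.

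Next I would take standard parts. For fixed $m\in\mathcal{Z}$ we have from \cite{dep5} that $\psi_\eta(m)^2U_\eta(m)\simeq -m^2$ (since $\psi_\eta(m)\to im$ in the appropriate sense and $U_\eta(m)\to 1$ as the mesh goes to zero), so $1+\tfrac{1}{\nu}\psi_\eta(m)^2U_\eta(m)=1-\tfrac{m^2}{\nu}+o(\tfrac1\nu)$, and therefore $\bigl(1+\tfrac1\nu\psi_\eta(m)^2U_\eta(m)\bigr)^{[\nu t]}\simeq e^{-m^2{}^{\circ}t}$ for finite $t$; this is the standard "$(1+x/n)^n\to e^x$" estimate, done nonstandardly, and is legitimate because $m$ ranges over the finite integers $\mathcal{Z}$, keeping $\tfrac{m^2}{\nu}$ infinitesimal and the number of factors comparable to $\nu$. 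Finally, $\hat{\overline{f}}(m)\simeq \hat{g}(m)$: by hypothesis $f$ is bounded and $S$-continuous with ${}^{\circ}f(x)=g({}^{\circ}x)$, so ${}^{\circ}\!\bigl(\tfrac{1}{2\pi}\int_{\overline{\mathcal{V}_\eta}}f(y)\exp_\eta(-iym)\,d\mu_\eta(y)\bigr)=\tfrac{1}{2\pi}\int_{-\pi}^{\pi}g(y)e^{-imy}\,dy=\hat{g}(m)$, by the standard-part/Loeb-integral correspondence for $S$-continuous integrands against the counting measure (this is where one invokes Lemma \ref{squaremmp} and the usual push-down of $S$-integrable functions). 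Combining the three standard-part computations and using that ${}^{\circ}$ respects products of finite quantities gives ${}^{\circ}\bigl(\hat{\overline{F}}(m,t)\bigr)=e^{-m^2{}^{\circ}t}\,\hat{g}(m)$.

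The main obstacle I expect is controlling the error in the $(1+x/\nu)^{[\nu t]}$ limit \emph{uniformly enough} to conclude $\simeq e^{-m^2{}^\circ t}$ rather than merely a finite limit — one needs $[\nu t]\cdot\log\bigl(1+\tfrac1\nu\psi_\eta(m)^2U_\eta(m)\bigr)\simeq -m^2{}^\circ t$, which requires that the logarithm's Taylor remainder, multiplied by $[\nu t]\sim\nu t$, stays infinitesimal; since $\tfrac{m^2}{\nu}$ is infinitesimal and $t$ is finite this works, but it has to be written carefully (e.g. via $|\log(1+z)-z|\le |z|^2$ for $|z|\le 1/2$, then multiply by $[\nu t]$, getting a bound $\sim \nu t\cdot \tfrac{m^4}{\nu^2}=\tfrac{t m^4}{\nu}\simeq 0$). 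A secondary subtlety is making sure $1+\tfrac1\nu\psi_\eta(m)^2 U_\eta(m)$ is bounded in modulus by something like $1$ (or $e^{o(1/\nu)}$) so no overflow occurs along the $[\nu t]$ iterations; this follows from $\mathrm{Re}(\psi_\eta(m)^2U_\eta(m))\le 0$ together with the estimate $|\psi_\eta(m)|\le 2m$ from \cite{dep5}, exactly the same sign/size control already used for the real-variable bound in Lemma \ref{nsheat}. Everything else — linearity of $\hat{()}$, the recurrence, and the three push-downs — is routine given the lemmas already established.
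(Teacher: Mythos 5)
Your proposal follows essentially the same route as the paper's own proof: restrict $F$, apply the vertical Fourier transform and Lemma \ref{restrictedsine} to turn the discrete heat equation into the one-step recurrence $\hat{\overline{F}}(m,t+{1\over\nu})=(1+{\theta_{\eta}(m)\over\nu})\hat{\overline{F}}(m,t)$ with $\theta_{\eta}(m)=\psi_{\eta}^{2}(m)U_{\eta}(m)$, solve it as $(1+{\theta_{\eta}(m)\over\nu})^{[\nu t]}\hat{\overline{f}}(m)$, take standard parts using ${^{\circ}\theta_{\eta}(m)}=-m^{2}$ and the exponential limit, and identify ${^{\circ}(\hat{\overline{f}}(m))}$ with $\hat{g}(m)$ via $S$-continuity and the Loeb-integral correspondence. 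Your added care with the logarithmic error estimate in the $(1+x/\nu)^{[\nu t]}$ step is a legitimate refinement of a point the paper passes over quickly, but it is the same argument.
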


\begin{proof}
As ${\partial F\over \partial t}-{\partial^{2}F\over \partial x^{2}}=0$, we have, taking restrictions, that;\\

$\overline{{\partial F\over \partial t}}-\overline{{\partial^{2}F\over \partial x^{2}}}=0$\\

Taking Fourier coefficients, for $m\in\mathcal{Z}$, and, using Lemma \ref{restrictedsine};\\

${d\hat{\overline{F}}(m,t)\over dt}-\theta_{\eta}(m)\hat{\overline{F}}(m,t)=0$\\

where $\theta_{\eta}(m)=\psi_{\eta}^{2}(m)U_{\eta}(m)$. Then;\\

$\nu(\hat{\overline{F}}(m,t+{1\over \nu})-\hat{\overline{F}(m,t)})=\theta_{\eta}(m)\hat{\overline{F}}(m,t)$\\

Rearranging, we obtain;\\

$\hat{\overline{F}}(m,t+{1\over\nu})=(1+{\theta_{\eta}(m)\over \nu})\hat{\overline{F}}(m,t)$\\

and, solving the recurrence;\\

$\hat{\overline{F}}(m,t)=(1+{\theta_{\eta}(m)\over \nu})^{[\nu t]}\hat{\overline{F}}(m,0)$\\

Taking standard parts, and using the facts that $lim_{n\rightarrow\infty}(1+{x\over n})^{n}=e^{x}$, and ${^{\circ}{\theta_{\eta}(m)}}=-m^{2}$, we obtain;\\

${^{\circ}(\hat{\overline{F}}(m,t))}=(e^{-m^{2}{^{\circ}t}}){^{\circ}}(\hat{\overline{f}}(m))$\\

for finite $t$. As $f$ is bounded and $S$-continuous, so is $\overline{f}$, and ${^{\circ}f}={^{\circ}\overline{f}}$ is integrable. We have that;\\

${^{\circ}\int_{\overline{\mathcal{V}_{\eta\over 2}}}\overline{f}(x)exp_{\eta\over 2}(-imx)}d\mu_{\eta\over 2}=\int_{-\pi}^{\pi}{^{\circ}f({^{\circ} x})}e^{-ixm}dx=\int_{-\pi}^{\pi}g(x)e^{-ixm}dx$\\

Hence;\\

${^{\circ}(\hat{\overline{F}}(m,t))}=(e^{-m^{2}{^{\circ}t}}\hat{g}(m))$\\

as required.\\

\end{proof}

\begin{theorem}
Let $g\in C^{\infty}([-\pi,\pi])$, and $G$ be as in Lemma \ref{heatequation}. Let $f=g_{\eta}$, and let $F$ be as in Lemma \ref{nsheat}. Then, for finite $t$, and $(x,t)\in\overline{\mathcal{V}_{\eta}}\times\overline{ \mathcal{T}_{\nu}}$, ${^{\circ}F}(x,t)=G({^{\circ}x},{^{\circ}t})$

\end{theorem}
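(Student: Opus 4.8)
The plan is to identify $F$ with $G$ through their Fourier expansions. By the formula established at the end of the proof of Lemma~\ref{heatequation}, the classical solution is $G(x,t)=\sum_{m\in\mathcal{Z}}e^{-m^{2}t}\mathcal{F}(g)(m)e^{imx}$, so it suffices to prove, for each $(x,t)\in\overline{\mathcal{V}_{\eta}}\times\overline{\mathcal{T}_{\nu}}$ with $t$ finite, that $({^{\circ}F})(x,t)=\sum_{m\in\mathcal{Z}}e^{-m^{2}{^{\circ}t}}\mathcal{F}(g)(m)e^{im{^{\circ}x}}$. Everything reduces to Lemmas~\ref{initialbounded},~\ref{decayrate} and~\ref{coefficient}.

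Fix a finite $t$. Since $g\in C^{\infty}([-\pi,\pi])$, Lemma~\ref{initialbounded} gives a standard $M$ with $\max\{f,f',f''\}\le M$ for $f=g_{\eta}$, and, using $\eta\le\sqrt{2\pi}\,\nu$ and Lemma~\ref{nsheat}, $\max\{F,{\partial F\over\partial x},{\partial^{2} F\over\partial x^{2}}\}\le M$; in particular the restriction $\overline{F_{t}}\in V(\overline{\mathcal{V}_{\eta/2}})$ has $\overline{(F_{t})''}$ bounded (by $M$, uniformly in $t$). Then Lemma~\ref{decayrate} applied to $\overline{F_{t}}$ gives the absolutely convergent identity $({^{\circ}\overline{F_{t}}})(x)=\sum_{m\in\mathcal{Z}}{^{\circ}(\hat{\overline{F_{t}}}(m))}\exp(im{^{\circ}x})$ for $x\in\overline{\mathcal{V}_{\eta}}$. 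Now $f=g_{\eta}$ is bounded, $S$-continuous, and ${^{\circ}f}(x)=g({^{\circ}x})$ with $g$ continuous and bounded, so Lemma~\ref{coefficient} applies and yields ${^{\circ}(\hat{\overline{F}}(m,t))}=e^{-m^{2}{^{\circ}t}}\mathcal{F}(g)(m)$ for $m\in\mathcal{Z}$. Since $\hat{\overline{F}}(m,t)=\hat{\overline{F_{t}}}(m)$, substituting into the previous display gives $({^{\circ}\overline{F_{t}}})(x)=\sum_{m\in\mathcal{Z}}e^{-m^{2}{^{\circ}t}}\mathcal{F}(g)(m)\exp(im{^{\circ}x})=G({^{\circ}x},{^{\circ}t})$, which proves the theorem for $x$ in the even subgrid $\overline{\mathcal{V}_{\eta/2}}\subset\overline{\mathcal{V}_{\eta}}$.

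It remains to cover the odd grid points, which is the only step not already supplied by the cited lemmas. By Definition~\ref{partials} the operator ${\partial^{2}\over\partial x^{2}}$, and hence the explicit update rule in the proof of Lemma~\ref{nsheat}, couples only grid points of the same parity; consequently $F$ restricted to the odd subgrid of $\overline{\mathcal{V}_{\eta}}$ (a translate of $\overline{\mathcal{V}_{\eta/2}}$ by $\pi/\eta$) solves a nonstandard heat equation of the same form, with initial datum $g_{\eta}$ sampled on the odd points. Because $g$ is continuous, this datum is bounded, $S$-continuous, with standard part $g({^{\circ}x})$, so Lemmas~\ref{decayrate} and~\ref{coefficient} apply in exactly the same way to the translated problem and again yield $G({^{\circ}x},{^{\circ}t})$. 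Hence ${^{\circ}F}(x,t)=G({^{\circ}x},{^{\circ}t})$ for all $(x,t)$; in particular $F_{t}$ is $S$-continuous on all of $\overline{\mathcal{V}_{\eta}}$, in line with the elementary bound $|F(x+{2\pi\over\eta},t)-F(x,t)|={2\pi\over\eta}|{\partial F\over\partial x}|\le{2\pi M\over\eta}\simeq 0$.

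The main obstacle I expect is precisely this last assembly: the genuinely hard analytic content --- the uniform $O(1/m^{2})$ decay of the restricted Fourier coefficients, the interchange of ${^{\circ}(\cdot)}$ with the infinite sum, and the limit $\hat{\overline{F}}(m,t)=(1+{\theta_{\eta}(m)\over\nu})^{[\nu t]}\hat{\overline{F}}(m,0)\to e^{-m^{2}{^{\circ}t}}\mathcal{F}(g)(m)$ --- is already packaged into Lemmas~\ref{decayrate} and~\ref{coefficient}, so beyond combining these one only needs care with the two parity classes and with the use of continuity of $G$ to pass from ${^{\circ}x'}$ to ${^{\circ}x}$ when $x'$ is the nearby point of the subgrid.
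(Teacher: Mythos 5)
Your proposal is correct and, in its main body, is exactly the paper's argument: boundedness of $F$, ${\partial F\over\partial x}$, ${\partial^{2}F\over\partial x^{2}}$ from Lemmas \ref{nsheat} and \ref{initialbounded}, the expansion ${^{\circ}\overline{F}}(x,t)=\sum_{m\in\mathcal{Z}}{^{\circ}(\hat{\overline{F}}(m,t))}\exp(im{^{\circ}x})$ from Lemma \ref{decayrate}, the identification ${^{\circ}(\hat{\overline{F}}(m,t))}=e^{-m^{2}{^{\circ}t}}\mathcal{F}(g)(m)$ from Lemma \ref{coefficient}, and comparison with the series for $G$ in Lemma \ref{heatequation}. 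Where you genuinely diverge is the last step, passing from the restricted (even-subgrid) function $\overline{F}$ to $F$ on all of $\overline{\mathcal{V}_{\eta}}$: the paper simply invokes $S$-continuity of $F_{t}$, ``by the fact that $({\partial F\over\partial x})_{t}$ is bounded,'' while you rerun the argument on the odd subgrid, observing that the centered second difference couples only points of equal parity, so the translate $\tilde{F}(x,t)=F(x+{\pi\over\eta},t)$ solves the same nonstandard heat equation with initial datum $f(\cdot+{\pi\over\eta})$, which still satisfies the hypotheses of Lemmas \ref{decayrate} and \ref{coefficient}. This is a real difference and arguably buys something: since ${\partial F\over\partial x}$ is a centered difference, its boundedness only controls $|F(x+{2\pi\over\eta},t)-F(x,t)|$, i.e.\ increments within one parity class (your own final display shows exactly this), so the paper's one-line $S$-continuity claim silently skips the cross-parity comparison that your translation argument handles explicitly; the price is that you must justify (as you do, a little briefly) that the restricted Fourier machinery of Lemmas \ref{restrictedsine}, \ref{decayrate}, \ref{coefficient} applies verbatim to the shifted problem, which follows from translation invariance of the update rule on the circular grid. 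Either route completes the proof; yours is slightly longer but closes a gap the paper leaves implicit.
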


\begin{proof}
By Lemma \ref{initialbounded}, we have that ${\partial^{2}F\over\partial x^{2}}$ is bounded. By Lemma \ref{decayrate};\\

${^{\circ}\overline{F}(x,t)}=\sum_{m\in\mathcal{Z}}{^{\circ}(\hat{\overline{F}}(m,t))}exp(im{^{\circ}x})$ $(*)$\\

By Lemma \ref{coefficient};\\

${^{\circ}(\hat{\overline{F}}(m,t))}=e^{-m^{2}{^{\circ}t}}\mathcal{F}(g)(m)$ $(**)$\\

Comparing $(*),(**)$, with the expression;\\

$G({^{\circ} x},{^{\circ}t})=\sum_{m\in\mathcal{Z}}e^{-m^{2}{^{\circ}t}}\mathcal{F}(g)(m)e^{im{^{\circ}x}}$\\

obtained in Lemma \ref{heatequation}, gives the result that ${^{\circ}\overline{F}(x,t)}=G({^{\circ}x},{^{\circ}t})$. However, $\overline{F}_{t}$ is $S$-continuous, for finite $t$, by the fact that $({\partial F\over \partial x})_{t}$ is bounded, from Lemma \ref{initialbounded}, hence;\\

${^{\circ}\overline{F}(x,t)}={^{\circ}F(x,t)}=G({^{\circ}x},{^{\circ}t})$\\

as required.\\

\end{proof}

\begin{theorem}
\label{equilibrium}
Let $g\in C^{\infty}([-\pi,\pi])$, and $G$ be as in Lemma \ref{heatequation}. Let $f=g_{\eta}$, and let $F$ be as in Lemma \ref{nsheat}. Then, for infinite $t$, and $(x,t)\in\overline{\mathcal{V}_{\eta}}\times\overline{ \mathcal{T}_{\nu}}$, $F(x,t)\simeq \int_{\overline{\mathcal{V}_{\eta}}} f d\mu_{\eta}$.

\end{theorem}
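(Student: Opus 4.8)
The plan is to solve the nonstandard heat equation modewise in the discrete Fourier basis, exactly as in the proof of Lemma~\ref{coefficient}, and to read off that, as $t$ becomes infinite, every nonzero Fourier mode is annihilated while the zeroth mode, which is the mean of the initial datum, is preserved for all time; the asserted equilibrium then follows.

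First I would, as usual, pass to the restriction $\overline{F}$ on $\overline{\mathcal{V}_{\eta\over 2}}$: the recurrence in the proof of Lemma~\ref{nsheat} couples each grid point only to those two steps away, so $\overline{F}$ obeys it on its own, with $\overline{F}_0=\overline{f}$. Carrying out the computation in the proof of Lemma~\ref{coefficient} — which, apart from the final passage to standard parts, is valid for every $m\in\mathcal{Z}_{\eta\over 2}$, not merely finite $m$ — and using Lemma~\ref{restrictedsine}, one gets
\[
\hat{\overline{F}}(m,t)=\Big(1+{\theta_\eta(m)\over\nu}\Big)^{[\nu t]}\hat{\overline{f}}(m),\qquad \theta_\eta(m)=\psi_\eta(m)^2U_\eta(m).
\]
A direct computation gives $\theta_\eta(m)=-{\eta^2\over\pi^2}\sin^2\big({\pi m\over\eta}\big)$, so $\theta_\eta(m)$ is real, nonpositive, and vanishes precisely when $m=0$: in particular $\hat{\overline{F}}(0,t)=\hat{\overline{f}}(0)$ for every $t$, while, since $\sin^2\le 1$ on $|m|\le{\eta\over 2}$ and the hypothesis of Lemma~\ref{nsheat} gives $1-{\eta^2\over 2\pi^2\nu}\ge 0$, i.e.\ ${\eta^2\over\pi^2\nu}\le 2$, we obtain the uniform estimate $\big|1+{\theta_\eta(m)\over\nu}\big|\le 1$ on all of $\mathcal{Z}_{\eta\over 2}$ (this is just the spectral form of the maximum-principle bound used in Lemma~\ref{nsheat}).

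Next I would annihilate the nonzero modes as $t\to\infty$. Since $f=g_\eta$, Lemma~\ref{initialbounded} makes $f''$ bounded, so Lemma~\ref{decayrate} supplies a standard constant $F$ with $|\hat{\overline{f}}(m)|\le F/m^2$ for $0\neq m\in\mathcal{Z}_{\eta\over 2}$. By the inversion formula of Definition~\ref{nstransf},
\[
\overline{F}(x,t)=\hat{\overline{f}}(0)+{^{*}\sum}_{0\neq m\in\mathcal{Z}_{\eta\over 2}}\Big(1+{\theta_\eta(m)\over\nu}\Big)^{[\nu t]}\hat{\overline{f}}(m)\,exp_{\eta\over 2}(ixm),
\]
and it is enough to see the error sum is infinitesimal. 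Given a standard $\epsilon>0$, choose a standard $N$ with $2F\sum_{|m|>N}m^{-2}<\epsilon/2$; then the tail over $N<|m|\le\eta/2$ is bounded in modulus, using $\big|1+{\theta_\eta(m)\over\nu}\big|\le 1$, by $\sum_{|m|>N}F/m^2<\epsilon/2$, whereas the head has only $2N$ terms, each infinitesimal because for finite $m\neq 0$ we have $\theta_\eta(m)\simeq -m^2$, so $[\nu t]\ln\!\big(1+{\theta_\eta(m)\over\nu}\big)\simeq t\,\theta_\eta(m)$ is negatively infinite and $\big(1+{\theta_\eta(m)\over\nu}\big)^{[\nu t]}\simeq 0$ for infinite $t$. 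As $\epsilon$ was an arbitrary standard positive, $\overline{F}(x,t)\simeq\hat{\overline{f}}(0)$ for every $x$.

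Finally I would pass back to $F$ on the full grid: the odd grid points carry, after a unit shift, an isomorphic copy of the same evolution with initial datum the odd samples of $g$, which is again $S$-continuous with standard part $g$, so the identical argument shows $F\simeq\hat{\overline{f}}(0)$ there too; and since $f=g_\eta$ is $S$-continuous, $\hat{\overline{f}}(0)\simeq{1\over 2\pi}\int_{-\pi}^{\pi}g$ (alternatively, $\big({\partial F\over\partial x}\big)_t$ is bounded by Lemma~\ref{initialbounded}, controlling the cross-parity variation directly). Hence, for infinite $t$, $F(x,t)\simeq\hat{\overline{f}}(0)\simeq{1\over 2\pi}\int_{\overline{\mathcal{V}_\eta}}f\,d\mu_\eta$, the equilibrium mean asserted in the statement. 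The step I expect to be the real obstacle is the uniformity in $m$ of the decay: the naive pointwise estimate $\big(1+{\theta_\eta(m)\over\nu}\big)^{[\nu t]}\simeq e^{-m^2\,{^{\circ}t}}$ holds only for finite $m$ and degenerates once $m$ is comparable to $\eta$, so the vanishing of the whole series has to be harvested from the $1/m^2$-summability of $\hat{\overline{f}}$ together with the crude uniform bound $\big|1+{\theta_\eta(m)\over\nu}\big|\le 1$ — which is exactly what the hypothesis $\eta^2\le 2\pi^2\nu$ supplies.
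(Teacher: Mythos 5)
Your argument is correct and follows the same overall strategy as the paper: restrict to the even sublattice, solve the heat recurrence modewise as in the proof of Lemma \ref{coefficient}, kill every finite nonzero mode because $(1+\theta_{\eta}(m)/\nu)^{[\nu t]}$ is infinitesimal for infinite $t$, identify the surviving $m=0$ term with the mean of $f$, and return from $\overline{F}$ to $F$ by $S$-continuity. Where you genuinely diverge is the treatment of the infinite modes: the paper disposes of the tail by applying Lemma \ref{decayrate} to $\overline{F}_{t}$ itself, which is licensed by the time-uniform bound on ${\partial^{2}F\over\partial x^{2}}$ from Lemmas \ref{nsheat} and \ref{initialbounded}, whereas you keep only the time-zero bound $|\hat{\overline{f}}(m)|\leq F/m^{2}$ and dominate the propagator uniformly on $\mathcal{Z}_{\eta/2}$ via the explicit formula $\theta_{\eta}(m)=-{\eta^{2}\over\pi^{2}}\sin^{2}({\pi m\over\eta})$ and the stability hypothesis $\eta^{2}\leq 2\pi^{2}\nu$, giving $|1+\theta_{\eta}(m)/\nu|\leq 1$. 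Both mechanisms ultimately rest on the same CFL-type hypothesis (the paper routes it through the maximum principle of Lemma \ref{nsheat}), but your version is more explicit about where it enters and is self-contained at the spectral level; its only extra requirement is that $exp_{\eta}$ satisfy the exact trigonometric identity behind your formula for $\theta_{\eta}(m)$, which is consistent with the properties the paper itself uses ($|U_{\eta}(m)|=1$, ${}^{\circ}\theta_{\eta}(m)=-m^{2}$). One caveat you inherited from the paper rather than created: by Definition \ref{nstransf}, $\hat{\overline{f}}(0)={1\over 2\pi}\int_{\overline{\mathcal{V}_{\eta/2}}}\overline{f}\,d\mu_{\eta/2}$, so the equilibrium value is the mean with the factor ${1\over 2\pi}$, exactly as your last line states; the statement of Theorem \ref{equilibrium} and the paper's own proof silently drop this factor, so your conclusion is the normalization-correct form of the claim rather than a gap in your argument.
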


\begin{proof}
Again, using Lemma \ref{initialbounded} and lemma \ref{decayrate}, we have, using the proof of \ref{coefficient}, that;\\

$\overline{F}(x,t)\simeq \sum_{m\in\mathcal{Z}}exp_{\nu}^{-\theta_{\eta}(m)t}\hat{f}(m)exp_{\eta\over 2}^{imx}$\\

Taking standard parts, using lemma \ref{decayrate}, and the fact that $exp_{\nu}^{-\theta_{\eta}(m)t}\simeq 0$, for finite $m$ and infinite $t$, we see that all the coefficients vanish, except when $m=0$, that is;

$\overline{F}(x,t)\simeq \hat{f}(0)=\int_{\overline{\mathcal{V}_{{\eta\over 2}}}}\overline{f}d\mu_{{\eta\over 2}}\simeq \int_{\overline{\mathcal{V}_{\eta}}}f d\mu_{\eta} $\\

The result follows for $F(x,t)$ by $S$-continuity.\\

\end{proof}

\begin{rmk}
When $\eta={2\pi\sqrt{\nu}\over\sqrt{3}}$, we obtain, by Lemma \ref{nsheat}, the iterative scheme for the nonstandard Markov chain with transition probabilities $\{{1\over 3},{1\over 3},{1\over 3}\}$. By Lemma \ref{equilibrium}, we obtain convergence to equilibrium after at least $\nu^{2}={9\eta^{4}\over 16\pi^{2}}$ steps, which is polynomial in the number of states $\eta$. This is a considerable improvement over the result  in Lemma \ref{nonstandard1}, which is exponential in $\eta$. The discrepancy results from the choice of a "smooth" initial distribution. The method of reverse martingales is useful to consider other "nonsmooth" cases, for which a Fourier analysis is impossible.

\end{rmk}

\end{document}